%
%
%
%
\documentclass{amsart}

\newtheorem{theorem}{Theorem}[section]
\newtheorem{lemma}[theorem]{Lemma}
\newtheorem{corollary}[theorem]{Corollary}

\theoremstyle{definition}

\theoremstyle{remark}
\newtheorem{remark}[theorem]{Remark}
\setcounter{MaxMatrixCols}{15}

\usepackage{amsmath}
\usepackage{amssymb}
\usepackage{graphicx}
\usepackage{mathrsfs}
\usepackage{algorithm}
\usepackage{algorithmicx}
\usepackage{appendix}
\usepackage{enumitem}
\usepackage{stmaryrd}
\usepackage{pifont}
\usepackage{bm}
\usepackage{booktabs}
\usepackage{array}
\usepackage{multirow}
\usepackage[colorlinks, linkcolor=blue, anchorcolor=blue, citecolor=blue]{hyperref}
\AtBeginDocument{
                 \label{CorrectFirstPageLabel}
                 
                }

\DeclareMathOperator*{\rank}{rank}
\DeclareMathOperator*{\diag}{diag}


\numberwithin{equation}{section}



\begin{document}

\title[TG convergence theory for SPSD systems]{Convergence analysis of two-grid methods for symmetric positive semidefinite systems}

\author{Xuefeng Xu}
\address{School of Mathematics, Southeast University, Nanjing 211189, China}
\email{xuxuefeng@lsec.cc.ac.cn; xuxuefeng@seu.edu.cn}


\subjclass[2020]{Primary 65F08, 65F10, 65N55; Secondary 15A10, 15A18}



\keywords{multigrid methods, two-grid methods, symmetric positive semidefinite matrices, null space, Moore--Penrose inverse}

\begin{abstract}
Two-grid theory plays a fundamental role in the design and analysis of multigrid methods. This paper is devoted to a new convergence analysis of two-grid methods for singular and symmetric positive semidefinite systems. Specifically, we derive a concise identity for characterizing the convergence factor of two-grid methods, with the Moore--Penrose inverse of coarse-grid matrix being used as a coarse solver. Furthermore, we present a convergence estimate for two-grid methods with approximate coarse solvers. Our new theory does not require any additional assumptions on the coefficient matrix, especially on its null space.
\end{abstract}

\maketitle



\section{Introduction}

Multigrid is a powerful solver with linear or near-linear computational complexity that has been widely used for the numerical solution of partial differential equations; see, e.g.,~\cite{Hackbusch1985,Briggs2000,Trottenberg2001,Vassilevski2008,XZ2017}. The foundation of multigrid is a two-grid scheme, which consists of two complementary error-reduction processes: \textit{smoothing} (or \textit{relaxation}) and \textit{coarse-grid correction}. In general, the smoothing process is chosen as a simple iterative method (e.g., the weighted Jacobi and Gauss--Seidel methods), while these classical methods are commonly only effective at reducing high-frequency (or oscillatory) error. The remaining low-frequency (or smooth) error will be treated by the correction process, in which a coarse-grid system with fewer unknowns needs to be solved. In practice, it is often too costly to solve the coarse-grid system exactly. A typical strategy is to apply the two-grid scheme recursively in the correction steps. The resulting methods (namely, multigrid methods) possess a multilevel structure.

For symmetric positive definite (SPD) linear systems, the convergence of multigrid methods has been well studied; see, e.g.,~\cite{Xu1992,Yserentant1993,XZ2002,Falgout2004,Falgout2005,MacLachlan2014,XZ2017,XXF2018,XXF2022-1,XXF2022-2,XXF2025}. Many important models or methods in mathematics may lead to symmetric positive semidefinite (SPSD) linear systems, such as partial differential equations with pure Neumann boundary conditions (see, e.g.,~\cite{Bochev2005}), generalized finite element methods (see, e.g.,~\cite{Strouboulis2000-1,Strouboulis2000-2,Strouboulis2001,XZ2003}), and Laplacian matrices of graphs (see, e.g.,~\cite{Merris1994,Spielman2010,Livne2012}). Convergence of additive and multiplicative Schwarz methods for SPSD systems was studied in~\cite{Nabben2006}. Convergence theory of general stationary iterative methods for such systems can be found, e.g., in~\cite{Keller1965,Cao2001,Lee2006,Cao2008,Frommer2008,Wu2008,Frommer2014}.

Our focus here is on two-grid methods for SPSD systems. Two-grid theory plays a fundamental role in the design or analysis of multigrid methods; see, e.g.,~\cite{Brannick2010,Manteuffel2018,Manteuffel2019,XXF2022-1,XXF2022-2,XXF2025}. In the SPD case, an elegant identity for characterizing the convergence factor of two-grid methods was established in~\cite{Falgout2005}; see~\cite{XZ2002,Zikatanov2008} for an abstract version. The identity is a powerful tool for analyzing the properties of two-grid methods; see, e.g.,~\cite{Falgout2005,Brannick2018,XXF2018}. Similarly, we expect to establish a convergence identity for two-grid methods applied to SPSD systems, without requiring any additional conditions on the null space of the coefficient matrix $A$, denoted by $\mathcal{N}(A)$.

Under several essential conditions on local null spaces, a sharp convergence estimate for successive subspace correction methods~\cite{Xu1992,XZ2002} applied to SPSD systems was presented in~\cite{Lee2008}; see~\cite{Wu2008} for a simplified analysis. More specifically, \cite{Lee2008} offers an identity for the seminorm of error propagation operator, which involves a `sup-inf-inf' expression relating to the global null space and its orthogonal complement. In~\cite{Bolten2011}, a classical two-grid convergence estimate for SPD systems (see~\cite{McCormick1982,Ruge1987,Stuben2001}) was extended to the semidefinite case. A two-grid convergence theory for singular systems was developed in~\cite{Notay2016}, in which a condition on the coarse space and $\mathcal{N}(A)$ is required. Moreover, an extended prolongation operator, constructed from $\mathcal{N}(A)$, is needed. A noteworthy contribution is the two-grid identity in~\cite[Theorem~5.3]{XZ2017}, which was proved under the condition that $\mathcal{N}(A)$ is contained in the coarse space.

In some cases, however, the \textit{full} information of $\mathcal{N}(A)$ may not be readily available. For example, as indicated in~\cite[page~52]{Strouboulis2000-1}, the eigenfunctions associated with the zero eigenvalues are generally unknown in generalized finite element methods. In addition, most of the existing results focus on the case where the coarse-grid system is solved exactly, in the sense that a generalized inverse of the coarse-grid matrix is used as a coarse solver. For singular systems, more general coarse solvers are seldom considered in the literature. The difficulty may arise from the fact that the error propagation operator of coarse-grid correction fails to be a projector when the coarse-grid system is solved approximately.

In this paper, we establish a new two-grid convergence theory for SPSD systems. Our main contribution is twofold:
\begin{itemize}[leftmargin=0.85cm]

\item We derive a succinct identity for the convergence factor of two-grid methods, with the Moore--Penrose inverse of coarse-grid matrix being used as a coarse solver (for convenience, it is called an \textit{exact} coarse solver);

\item We present a convergence estimate for two-grid methods with inexact coarse solvers (various approximation strategies can be utilized to solve the coarse-grid system).

\end{itemize}
It is worth mentioning that our theory has several distinctive features in comparison with the existing ones, as described below.
\begin{itemize}[leftmargin=0.85cm]

\item It does not require any additional conditions on the coefficient matrix, especially on its null space.

\item It gives an identity for the convergence factor of exact two-grid methods instead of an upper bound.

\item It provides a convergence estimate for two-grid methods with inexact coarse solvers, that is, our analysis is not confined to the exact case.

\end{itemize}

The rest of this paper is organized as follows. In Section~\ref{sec:pre}, we give a sufficient and necessary condition for two-grid convergence factor to be less than $1$. In Section~\ref{sec:exact}, we present a concise identity for the convergence factor of exact two-grid methods. In Section~\ref{sec:inexact}, we study the convergence of two-grid methods with approximate coarse solvers, and provide a numerical example to illustrate our theoretical estimate. In Section~\ref{sec:con}, we make some concluding remarks.

\section{Preliminaries} \label{sec:pre}

We start with some notation that will be used in the subsequent discussions.

\begin{itemize}[leftmargin=0.9cm]

\item[--] For a symmetric matrix $S$, $S\succ 0$ and $S\succeq 0$ mean that $S$ is positive definite and positive semidefinite, respectively.

\item[--] $I_{n}$ denotes the $n\times n$ identity matrix (or $I$ when its size is clear from context).

\item[--] $\mathcal{R}(\cdot)$ denotes the column space (or range) of a matrix.

\item[--] $\mathcal{N}(\cdot)$ denotes the null space (or kernel) of a matrix.

\item[--] $\lambda_{\min}(\cdot)$ and $\lambda_{\max}(\cdot)$ denote the smallest and largest eigenvalues of a matrix with real eigenvalues, respectively.

\item[--] $\lambda_{i}(\cdot)$ denotes the $i$th smallest eigenvalue of a matrix with real eigenvalues.

\item[--] $\lambda(\cdot)$ denotes the spectrum of a matrix.

\item[--] $\|\cdot\|_{2}$ denotes the Euclidean norm of a vector.

\item[--] $(\cdot)^{\dagger}$ denotes the Moore--Penrose inverse of a matrix.

\end{itemize}

For any $A\in\mathbb{R}^{n\times n}\backslash\{0\}$ and $A\succeq 0$, we define
\begin{displaymath}
\|\mathbf{v}\|_{A}:=\sqrt{\mathbf{v}^{T}A\mathbf{v}} \quad \forall\,\mathbf{v}\in\mathbb{R}^{n}
\end{displaymath}
and
\begin{equation}\label{A-seminorm}
\|X\|_{A}:=\sup_{\mathbf{v}\in\mathbb{R}^{n}\backslash\mathcal{N}(A)}\frac{\|X\mathbf{v}\|_{A}}{\|\mathbf{v}\|_{A}} \quad \forall\,X\in\mathbb{R}^{n\times n}.
\end{equation}
It is easy to check that the function $\|\cdot\|_{A}:\mathbb{R}^{n\times n}\rightarrow[0,+\infty]$ satisfies the following properties:
\begin{itemize}[leftmargin=1.1cm]

\item[(a)] $\|X\|_{A}\geq 0$ for all $X\in\mathbb{R}^{n\times n}$;

\item[(b)] $\|\alpha X\|_{A}=|\alpha|\|X\|_{A}$ for all $\alpha\in\mathbb{R}$ and $X\in\mathbb{R}^{n\times n}$;

\item[(c)] $\|X_{1}+X_{2}\|_{A}\leq\|X_{1}\|_{A}+\|X_{2}\|_{A}$ for all $X_{1},X_{2}\in\mathbb{R}^{n\times n}$.

\end{itemize}
That is, \eqref{A-seminorm} defines a \textit{seminorm} on $\mathbb{R}^{n\times n}$.

The following lemma gives two elementary but useful properties of the $A$-seminorm.

\begin{lemma}\label{lem:norm}
If $X\in\mathbb{R}^{n\times n}$ and $A^{\frac{1}{2}}X=YA^{\frac{1}{2}}$ for some $Y\in\mathbb{R}^{n\times n}$, then
\begin{equation}\label{mat-vec}
\|X\mathbf{v}\|_{A}\leq\|X\|_{A}\|\mathbf{v}\|_{A} \quad \forall\,\mathbf{v}\in\mathbb{R}^{n}
\end{equation}
and
\begin{equation}\label{A-norm}
\|X\|_{A}=\max_{\mathbf{v}\in\mathcal{R}(A)\backslash\{0\}}\frac{\|Y\mathbf{v}\|_{2}}{\|\mathbf{v}\|_{2}}.
\end{equation}
\end{lemma}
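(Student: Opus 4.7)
The plan is to treat the two claims separately, with both reductions resting on two basic identities: $\|\mathbf{v}\|_A=\|A^{1/2}\mathbf{v}\|_2$ and $\|X\mathbf{v}\|_A=\|A^{1/2}X\mathbf{v}\|_2=\|YA^{1/2}\mathbf{v}\|_2$, together with the symmetric fact that $\mathcal{N}(A)=\mathcal{N}(A^{1/2})$ and $\mathcal{R}(A)=\mathcal{R}(A^{1/2})$, which hold because $A\succeq 0$.

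For \eqref{mat-vec}, I would split on whether $\mathbf{v}\in\mathcal{N}(A)$. If $\mathbf{v}\notin\mathcal{N}(A)$ the inequality is immediate from the definition \eqref{A-seminorm} of $\|X\|_A$. If $\mathbf{v}\in\mathcal{N}(A)$, then $\|\mathbf{v}\|_A=0$, and I need to verify that $\|X\mathbf{v}\|_A=0$ as well, so that both sides of \eqref{mat-vec} vanish (avoiding the $0\cdot\infty$ pitfall). This is exactly where the hypothesis $A^{1/2}X=YA^{1/2}$ enters: $\mathbf{v}\in\mathcal{N}(A)=\mathcal{N}(A^{1/2})$ gives $A^{1/2}X\mathbf{v}=YA^{1/2}\mathbf{v}=0$, so $X\mathbf{v}\in\mathcal{N}(A^{1/2})=\mathcal{N}(A)$ and $\|X\mathbf{v}\|_A=0$ as required.

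For \eqref{A-norm}, I would rewrite the quotient inside the supremum in \eqref{A-seminorm} using the two identities above:
\begin{displaymath}
\frac{\|X\mathbf{v}\|_A}{\|\mathbf{v}\|_A}=\frac{\|YA^{1/2}\mathbf{v}\|_2}{\|A^{1/2}\mathbf{v}\|_2}.
\end{displaymath}
Then I would perform the change of variables $\mathbf{w}=A^{1/2}\mathbf{v}$. As $\mathbf{v}$ ranges over $\mathbb{R}^n\setminus\mathcal{N}(A)=\mathbb{R}^n\setminus\mathcal{N}(A^{1/2})$, the vector $\mathbf{w}$ ranges over $\mathcal{R}(A^{1/2})\setminus\{0\}=\mathcal{R}(A)\setminus\{0\}$ (using $\mathcal{R}(A)=\mathcal{R}(A^{1/2})$ for $A\succeq 0$). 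Hence
\begin{displaymath}
\|X\|_A=\sup_{\mathbf{w}\in\mathcal{R}(A)\setminus\{0\}}\frac{\|Y\mathbf{w}\|_2}{\|\mathbf{w}\|_2}.
\end{displaymath}
Finally, since the Rayleigh-type quotient is continuous and invariant under positive scaling, the supremum is attained on the compact set $\mathcal{R}(A)\cap\{\|\mathbf{w}\|_2=1\}$, turning the $\sup$ into a $\max$.

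There is no serious obstacle here; the only subtlety is the boundary case $\mathbf{v}\in\mathcal{N}(A)$ in \eqref{mat-vec}, which is precisely what forces the intertwining hypothesis $A^{1/2}X=YA^{1/2}$ into the statement. Everything else is a change of variables combined with the standard identification of ranges and null spaces for a symmetric positive semidefinite matrix and its square root.
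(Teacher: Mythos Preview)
Your proposal is correct and follows essentially the same route as the paper: split \eqref{mat-vec} according to whether $\mathbf{v}\in\mathcal{N}(A)$, using the intertwining relation to kill the boundary case, and obtain \eqref{A-norm} by the change of variables $\mathbf{w}=A^{1/2}\mathbf{v}$ together with $\mathcal{R}(A)=\mathcal{R}(A^{1/2})$. The only cosmetic addition is your explicit compactness remark to justify replacing the supremum by a maximum, which the paper leaves implicit.
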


\begin{proof}
Note that
\begin{displaymath}
\mathcal{N}(A)=\mathcal{N}\big(A^{\frac{1}{2}}\big) \quad \text{and} \quad \mathcal{R}(A)=\mathcal{R}\big(A^{\frac{1}{2}}\big).
\end{displaymath}
It can be readily seen from~\eqref{A-seminorm} that the inequality~\eqref{mat-vec} holds for $\mathbf{v}\in\mathbb{R}^{n}\backslash\mathcal{N}(A)$. If $\mathbf{v}\in\mathcal{N}(A)$, then
\begin{displaymath}
\|X\mathbf{v}\|_{A}=\big\|A^{\frac{1}{2}}X\mathbf{v}\big\|_{2}=\big\|YA^{\frac{1}{2}}\mathbf{v}\big\|_{2}=0,
\end{displaymath}
where we have used the condition $A^{\frac{1}{2}}X=YA^{\frac{1}{2}}$. Hence, the inequality~\eqref{mat-vec} holds for all $\mathbf{v}\in\mathbb{R}^{n}$.

From~\eqref{A-seminorm}, we have
\begin{align*}
\|X\|_{A}&=\sup_{\mathbf{v}\in\mathbb{R}^{n}\backslash\mathcal{N}(A)}\frac{\big\|A^{\frac{1}{2}}X\mathbf{v}\big\|_{2}}{\big\|A^{\frac{1}{2}}\mathbf{v}\big\|_{2}}=\sup_{\mathbf{v}\in\mathbb{R}^{n}\backslash\mathcal{N}(A)}\frac{\big\|YA^{\frac{1}{2}}\mathbf{v}\big\|_{2}}{\big\|A^{\frac{1}{2}}\mathbf{v}\big\|_{2}}\\
&=\sup_{\mathbf{v}\in\mathcal{R}(A)\backslash\{0\}}\frac{\|Y\mathbf{v}\|_{2}}{\|\mathbf{v}\|_{2}}=\max_{\mathbf{v}\in\mathcal{R}(A)\backslash\{0\}}\frac{\|Y\mathbf{v}\|_{2}}{\|\mathbf{v}\|_{2}},
\end{align*}
which yields the expression~\eqref{A-norm}.
\end{proof}

Consider solving the linear system
\begin{equation}\label{system}
A\mathbf{u}=\mathbf{f},
\end{equation}
where $A\in\mathbb{R}^{n\times n}\backslash\{0\}$ is SPSD, $\mathbf{u}\in\mathbb{R}^{n}$, and $\mathbf{f}\in\mathcal{R}(A)$ (that is, $\mathbf{f}$ is orthogonal to $\mathcal{N}(A)$). When $A$ is singular (i.e., $\mathcal{N}(A)\neq\{0\}$), the system~\eqref{system} has infinitely many solutions for any $\mathbf{f}\in\mathcal{R}(A)$. We will analyze the convergence of two-grid methods for solving~\eqref{system}. Two basic assumptions involved in the analysis of two-grid methods are made below.
\begin{itemize}[leftmargin=1.5cm]

\item[\textbf{(A1):}] Let $M$ be an $n\times n$ matrix such that $\|I-MA\|_{A}\leq 1$.

\item[\textbf{(A2):}] Let $P$ be an $n\times n_{\rm c}$ prolongation (or interpolation) matrix such that the Galerkin coarse-grid matrix $A_{\rm c}:=P^{T}AP$ is nonzero, where $n_{\rm c}\,(<n)$ is the number of coarse variables.

\end{itemize}

\begin{remark}
In view of \textbf{(A1)} and \textbf{(A2)}, $M$ is not necessarily nonsingular, and $P$ is not necessarily of full column rank. In addition, measured in $\|\cdot\|_{A}$, the smoothing iteration in Algorithm~\ref{alg:TG} below is not required to be a strict contraction. Due to
\begin{displaymath}
A^{\frac{1}{2}}(I-MA)=\big(I-A^{\frac{1}{2}}MA^{\frac{1}{2}}\big)A^{\frac{1}{2}},
\end{displaymath}
it follows from~\eqref{A-norm} that
\begin{displaymath}
\|I-MA\|_{A}^{2}=\max_{\mathbf{v}\in\mathcal{R}(A)\backslash\{0\}}\frac{\big\|\big(I-A^{\frac{1}{2}}MA^{\frac{1}{2}}\big)\mathbf{v}\big\|_{2}^{2}}{\|\mathbf{v}\|_{2}^{2}}=1-\min_{\mathbf{v}\in\mathcal{R}(A)\backslash\{0\}}\frac{\mathbf{v}^{T}A^{\frac{1}{2}}\overline{M}A^{\frac{1}{2}}\mathbf{v}}{\mathbf{v}^{T}\mathbf{v}},
\end{displaymath}
where
\begin{equation}\label{barM}
\overline{M}:=M+M^{T}-M^{T}AM.
\end{equation}
Thus, $\|I-MA\|_{A}\leq 1$ if and only if $\mathbf{v}^{T}A^{\frac{1}{2}}\overline{M}A^{\frac{1}{2}}\mathbf{v}\geq 0$ for all $\mathbf{v}\in\mathcal{R}(A)\backslash\{0\}$, or, equivalently, $\mathbf{v}^{T}A^{\frac{1}{2}}\overline{M}A^{\frac{1}{2}}\mathbf{v}\geq 0$ for all $\mathbf{v}\in\mathbb{R}^{n}\backslash\{0\}$ (i.e., $A^{\frac{1}{2}}\overline{M}A^{\frac{1}{2}}\succeq 0$), because $\mathbb{R}^{n}$ is the direct sum of $\mathcal{R}(A)$ and $\mathcal{N}(A)$, denoted by $\mathbb{R}^{n}=\mathcal{R}(A)\oplus\mathcal{N}(A)$.
\end{remark}

With the above assumptions, a two-grid method for solving~\eqref{system} is described by Algorithm~\ref{alg:TG}, in which the Moore--Penrose inverse $A_{\rm c}^{\dagger}$ is used as a coarse solver. For convenience, such an algorithm is called an \textit{exact} two-grid method.

\begin{algorithm}[!htbp]

\caption{\ Exact two-grid method.}\label{alg:TG}

\smallskip

\begin{algorithmic}[1]

\State Smoothing: $\mathbf{u}^{(1)}\gets\mathbf{u}^{(0)}+M\big(\mathbf{f}-A\mathbf{u}^{(0)}\big)$ \Comment{$\mathbf{u}^{(0)}\in\mathbb{R}^{n}$ is an initial guess}

\smallskip

\State Restriction: $\mathbf{r}_{\rm c}\gets P^{T}\big(\mathbf{f}-A\mathbf{u}^{(1)}\big)$

\smallskip

\State Coarse-grid correction: $\mathbf{e}_{\rm c}\gets A_{\rm c}^{\dagger}\mathbf{r}_{\rm c}$

\smallskip

\State Prolongation: $\mathbf{u}_{\rm TG}\gets\mathbf{u}^{(1)}+P\mathbf{e}_{\rm c}$

\smallskip

\end{algorithmic}

\end{algorithm}

\begin{remark}
In Algorithm~\ref{alg:TG}, the coarse-grid system to be solved reads	
\begin{equation}\label{coarse-system}
A_{\rm c}\mathbf{x}_{\rm c}=\mathbf{r}_{\rm c}.
\end{equation}
It is easy to see that $\mathbf{e}_{\rm c}=A_{\rm c}^{\dagger}\mathbf{r}_{\rm c}$ is a solution to~\eqref{coarse-system}, because $A_{\rm c}A_{\rm c}^{\dagger}$ is a projector onto $\mathcal{R}(A_{\rm c})$ and
\begin{displaymath}
\mathbf{r}_{\rm c}=P^{T}\big(\mathbf{f}-A\mathbf{u}^{(1)}\big)=P^{T}A\big(\mathbf{u}-\mathbf{u}^{(1)}\big)\in\mathcal{R}\big(P^{T}A^{\frac{1}{2}}\big)=\mathcal{R}(A_{\rm c}).
\end{displaymath}
\end{remark}

From Algorithm~\ref{alg:TG}, we have
\begin{equation}\label{iteration}
\mathbf{u}-\mathbf{u}_{\rm TG}=E_{\rm TG}\big(\mathbf{u}-\mathbf{u}^{(0)}\big),
\end{equation}
where
\begin{displaymath}
E_{\rm TG}=\big(I-PA_{\rm c}^{\dagger}P^{T}A\big)(I-MA).
\end{displaymath}
Define
\begin{equation}\label{pi}
\Pi:=A^{\frac{1}{2}}PA_{\rm c}^{\dagger}P^{T}A^{\frac{1}{2}}.
\end{equation}
It can be easily verified that
\begin{displaymath}
\Pi^{2}=\Pi=\Pi^{T}, \quad \mathcal{N}(\Pi)=\mathcal{N}\big(P^{T}A^{\frac{1}{2}}\big), \quad \text{and} \quad \mathcal{R}(\Pi)=\mathcal{R}\big(A^{\frac{1}{2}}P\big),
\end{displaymath}
i.e., $\Pi$ is an $L^{2}$-orthogonal projector along (or parallel to) $\mathcal{N}\big(P^{T}A^{\frac{1}{2}}\big)$ onto $\mathcal{R}\big(A^{\frac{1}{2}}P\big)$. Since
\begin{displaymath}
A^{\frac{1}{2}}E_{\rm TG}=(I-\Pi)\big(I-A^{\frac{1}{2}}MA^{\frac{1}{2}}\big)A^{\frac{1}{2}},
\end{displaymath}
it follows from Lemma~\ref{lem:norm} and~\eqref{iteration} that
\begin{displaymath}
\|\mathbf{u}-\mathbf{u}_{\rm TG}\|_{A}\leq\|E_{\rm TG}\|_{A}\|\mathbf{u}-\mathbf{u}^{(0)}\|_{A},
\end{displaymath}
where $\|E_{\rm TG}\|_{A}$, called the \textit{convergence factor} of Algorithm~\ref{alg:TG}, is given by
\begin{equation}\label{ETG-Anorm}
\|E_{\rm TG}\|_{A}=\max_{\mathbf{v}\in\mathcal{R}(A)\backslash\{0\}}\frac{\big\|(I-\Pi)\big(I-A^{\frac{1}{2}}MA^{\frac{1}{2}}\big)\mathbf{v}\big\|_{2}}{\|\mathbf{v}\|_{2}}.
\end{equation}
Then
\begin{align*}
\|E_{\rm TG}\|_{A}^{2}&=\max_{\mathbf{v}\in\mathcal{R}(A)\backslash\{0\}}\frac{\mathbf{v}^{T}\big(I-A^{\frac{1}{2}}M^{T}A^{\frac{1}{2}}\big)(I-\Pi)\big(I-A^{\frac{1}{2}}MA^{\frac{1}{2}}\big)\mathbf{v}}{\mathbf{v}^{T}\mathbf{v}}\\
&=1-\min_{\mathbf{v}\in\mathcal{R}(A)\backslash\{0\}}\frac{\mathbf{v}^{T}F_{\rm TG}\mathbf{v}}{\mathbf{v}^{T}\mathbf{v}},
\end{align*}
where
\begin{equation}\label{F-TG}
F_{\rm TG}=A^{\frac{1}{2}}\overline{M}A^{\frac{1}{2}}+\big(I-A^{\frac{1}{2}}M^{T}A^{\frac{1}{2}}\big)\Pi\big(I-A^{\frac{1}{2}}MA^{\frac{1}{2}}\big).
\end{equation}
Hence, it holds that $\|E_{\rm TG}\|_{A}<1$ if and only if $\mathbf{v}^{T}F_{\rm TG}\mathbf{v}>0$ for all $\mathbf{v}\in\mathcal{R}(A)\backslash\{0\}$.

\begin{remark}
If $\|E_{\rm TG}\|_{A}<1$ and Algorithm~\ref{alg:TG} is carried out $\nu$ times iteratively (the corresponding output is denoted by $\mathbf{u}_{\rm TG}^{(\nu)}$), then $\|\mathbf{u}-\mathbf{u}_{\rm TG}^{(\nu)}\|_{A}\to 0$ as $\nu\to\infty$, in which case $\big\{\mathbf{u}_{\rm TG}^{(\nu)}\big\}_{\nu=1}^{\infty}$ converge to a particular solution to~\eqref{system}. Indeed, $\|\mathbf{u}-\mathbf{u}_{\rm TG}^{(\nu)}\|_{A}=0$ implies that $\mathbf{u}-\mathbf{u}_{\rm TG}^{(\nu)}\in\mathcal{N}(A)$ and hence $A\mathbf{u}_{\rm TG}^{(\nu)}=A\mathbf{u}=\mathbf{f}$.
\end{remark}

The following lemma provides a sufficient and necessary condition (namely,~\eqref{equiv-cond}) for $\|E_{\rm TG}\|_{A}<1$. It is worth pointing out that~\eqref{equiv-cond} is \textbf{not really} a restriction on $\mathcal{N}(A)$. In fact, we can deduce from~\eqref{equiv-cond} that a sufficient condition for $\|E_{\rm TG}\|_{A}<1$ is $\overline{M}\succ 0$. This condition can be satisfied by the conventional weighted Jacobi and Gauss--Seidel smoothers; see Remark~\ref{rek:SPD-M} below. \textit{Throughout this paper, we always assume that {\rm\textbf{(A1)}}, {\rm\textbf{(A2)}}, and~\eqref{equiv-cond} hold}.

\begin{lemma}\label{lem:equiv-cond}
The convergence factor $\|E_{\rm TG}\|_{A}$ is strictly less than $1$ if and only if
\begin{equation}\label{equiv-cond}
\mathcal{N}\big(A^{\frac{1}{2}}\overline{M}A^{\frac{1}{2}}\big)\cap\mathcal{N}\big(P^{T}(I-AM)A^{\frac{1}{2}}\big)=\mathcal{N}(A),
\end{equation}
where $\overline{M}$ is defined by~\eqref{barM}.
\end{lemma}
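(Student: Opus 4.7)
The plan is to start from the already-derived formula
\[
\|E_{\rm TG}\|_{A}^{2}=1-\min_{\mathbf{v}\in\mathcal{R}(A)\backslash\{0\}}\frac{\mathbf{v}^{T}F_{\rm TG}\mathbf{v}}{\mathbf{v}^{T}\mathbf{v}},
\]
so that $\|E_{\rm TG}\|_{A}<1$ is equivalent to $\mathbf{v}^{T}F_{\rm TG}\mathbf{v}>0$ for every nonzero $\mathbf{v}\in\mathcal{R}(A)$. First I would rewrite $F_{\rm TG}$ from \eqref{F-TG} as a sum of two manifestly positive semidefinite pieces: $A^{\frac{1}{2}}\overline{M}A^{\frac{1}{2}}\succeq 0$ (a consequence of the standing assumption $\|I-MA\|_{A}\le 1$, as spelled out in the remark following \eqref{barM}), and $\bigl(I-A^{\frac{1}{2}}M^{T}A^{\frac{1}{2}}\bigr)\Pi\bigl(I-A^{\frac{1}{2}}MA^{\frac{1}{2}}\bigr)\succeq 0$, which is just $C^{T}\Pi C$ with $C=I-A^{\frac{1}{2}}MA^{\frac{1}{2}}$ and $\Pi=\Pi^{2}=\Pi^{T}$. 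Hence $F_{\rm TG}\succeq 0$, so $\mathbf{v}^{T}F_{\rm TG}\mathbf{v}=0$ holds if and only if each summand annihilates $\mathbf{v}$.

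The second step is to translate these two annihilation conditions into the null-space language of \eqref{equiv-cond}. The first gives $\mathbf{v}\in\mathcal{N}\bigl(A^{\frac{1}{2}}\overline{M}A^{\frac{1}{2}}\bigr)$. For the second, using $\mathcal{N}(\Pi)=\mathcal{N}(P^{T}A^{\frac{1}{2}})$, the condition $\Pi\bigl(I-A^{\frac{1}{2}}MA^{\frac{1}{2}}\bigr)\mathbf{v}=0$ becomes $P^{T}A^{\frac{1}{2}}\bigl(I-A^{\frac{1}{2}}MA^{\frac{1}{2}}\bigr)\mathbf{v}=0$, i.e., $P^{T}(I-AM)A^{\frac{1}{2}}\mathbf{v}=0$. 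Therefore
\[
\mathcal{N}(F_{\rm TG})=\mathcal{N}\bigl(A^{\frac{1}{2}}\overline{M}A^{\frac{1}{2}}\bigr)\cap\mathcal{N}\bigl(P^{T}(I-AM)A^{\frac{1}{2}}\bigr)=:V,
\]
and $\|E_{\rm TG}\|_{A}<1$ is equivalent to $V\cap\mathcal{R}(A)=\{0\}$.

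The last step reconciles the condition $V\cap\mathcal{R}(A)=\{0\}$ with the stated identity $V=\mathcal{N}(A)$. The inclusion $\mathcal{N}(A)\subseteq V$ is automatic, because any $\mathbf{v}\in\mathcal{N}(A)=\mathcal{N}\bigl(A^{\frac{1}{2}}\bigr)$ is killed by both $A^{\frac{1}{2}}\overline{M}A^{\frac{1}{2}}$ and $P^{T}(I-AM)A^{\frac{1}{2}}$. Using the orthogonal decomposition $\mathbb{R}^{n}=\mathcal{R}(A)\oplus\mathcal{N}(A)$, I would argue both directions: if $V=\mathcal{N}(A)$, then $V\cap\mathcal{R}(A)=\mathcal{N}(A)\cap\mathcal{R}(A)=\{0\}$; conversely, if $V\cap\mathcal{R}(A)=\{0\}$, then any $\mathbf{v}\in V$ decomposed as $\mathbf{v}=\mathbf{v}_{1}+\mathbf{v}_{2}$ with $\mathbf{v}_{1}\in\mathcal{R}(A)$ and $\mathbf{v}_{2}\in\mathcal{N}(A)\subseteq V$ yields $\mathbf{v}_{1}=\mathbf{v}-\mathbf{v}_{2}\in V\cap\mathcal{R}(A)=\{0\}$, so $\mathbf{v}\in\mathcal{N}(A)$ and thus $V\subseteq\mathcal{N}(A)$.

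The only mildly subtle point is identifying $\mathcal{N}(F_{\rm TG})$ as the intersection of the two claimed null spaces; once the PSD splitting is in place this is routine, and the equivalence with \eqref{equiv-cond} reduces to the elementary direct-sum argument just outlined. No new calculations beyond those already displayed in the excerpt are required.
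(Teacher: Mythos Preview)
Your proposal is correct and uses the same ingredients as the paper's proof: the PSD splitting of $F_{\rm TG}$ into $A^{1/2}\overline{M}A^{1/2}$ and $C^{T}\Pi C$, together with the orthogonal decomposition $\mathbb{R}^{n}=\mathcal{R}(A)\oplus\mathcal{N}(A)$. The only difference is organizational---you first identify $\mathcal{N}(F_{\rm TG})$ with the intersection $V$ and then run a clean subspace argument, whereas the paper proves each implication separately by contradiction; the underlying mathematics is identical.
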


\begin{proof}
Observe first that
\begin{displaymath}
\mathbf{v}^{T}F_{\rm TG}\mathbf{v}=\underbrace{\mathbf{v}^{T}A^{\frac{1}{2}}\overline{M}A^{\frac{1}{2}}\mathbf{v}}_{\geq\,0}+\underbrace{\mathbf{v}^{T}\big(I-A^{\frac{1}{2}}M^{T}A^{\frac{1}{2}}\big)\Pi\big(I-A^{\frac{1}{2}}MA^{\frac{1}{2}}\big)\mathbf{v}}_{\geq\,0}\geq 0 \quad \forall\,\mathbf{v}\in\mathbb{R}^{n}
\end{displaymath}
and
\begin{displaymath}
\mathcal{N}(A)\subseteq\mathcal{N}\big(A^{\frac{1}{2}}\overline{M}A^{\frac{1}{2}}\big)\cap\mathcal{N}\big(P^{T}(I-AM)A^{\frac{1}{2}}\big).
\end{displaymath}

``$\|E_{\rm TG}\|_{A}<1\Rightarrow\eqref{equiv-cond}$'': If~\eqref{equiv-cond} does not hold, then there exists a vector $\mathbf{x}$ in $\big(\mathcal{N}\big(A^{\frac{1}{2}}\overline{M}A^{\frac{1}{2}}\big)\cap\mathcal{N}\big(P^{T}(I-AM)A^{\frac{1}{2}}\big)\big)\backslash\mathcal{N}(A)$. Since $\mathbb{R}^{n}=\mathcal{R}(A)\oplus\mathcal{N}(A)$, $\mathbf{x}$ can be decomposed into $\mathbf{x}=\mathbf{x}_{1}+\mathbf{x}_{2}$, where $\mathbf{x}_{1}\in\mathcal{R}(A)\backslash\{0\}$ and $\mathbf{x}_{2}\in\mathcal{N}(A)$. Due to
\begin{displaymath}
0=P^{T}(I-AM)A^{\frac{1}{2}}\mathbf{x}=P^{T}(I-AM)A^{\frac{1}{2}}\mathbf{x}_{1}=P^{T}A^{\frac{1}{2}}\big(I-A^{\frac{1}{2}}MA^{\frac{1}{2}}\big)\mathbf{x}_{1},
\end{displaymath}
it follows that
\begin{displaymath}
\Pi\big(I-A^{\frac{1}{2}}MA^{\frac{1}{2}}\big)\mathbf{x}_{1}=0.
\end{displaymath}
In addition,
\begin{displaymath}
A^{\frac{1}{2}}\overline{M}A^{\frac{1}{2}}\mathbf{x}_{1}=A^{\frac{1}{2}}\overline{M}A^{\frac{1}{2}}(\mathbf{x}_{1}+\mathbf{x}_{2})=A^{\frac{1}{2}}\overline{M}A^{\frac{1}{2}}\mathbf{x}=0.
\end{displaymath}
Hence,
\begin{displaymath}
\mathbf{x}_{1}^{T}F_{\rm TG}\mathbf{x}_{1}=\mathbf{x}_{1}^{T}A^{\frac{1}{2}}\overline{M}A^{\frac{1}{2}}\mathbf{x}_{1}+\mathbf{x}_{1}^{T}\big(I-A^{\frac{1}{2}}M^{T}A^{\frac{1}{2}}\big)\Pi\big(I-A^{\frac{1}{2}}MA^{\frac{1}{2}}\big)\mathbf{x}_{1}=0,
\end{displaymath}
which contradicts with the condition $\|E_{\rm TG}\|_{A}<1$ (or, equivalently, $\mathbf{v}^{T}F_{\rm TG}\mathbf{v}>0$ for all $\mathbf{v}\in\mathcal{R}(A)\backslash\{0\}$).

``$\eqref{equiv-cond}\Rightarrow\|E_{\rm TG}\|_{A}<1$'': Conversely, if~\eqref{equiv-cond} holds, then $\|E_{\rm TG}\|_{A}<1$. Otherwise, there must exist a vector $\mathbf{y}\in\mathcal{R}(A)\backslash\{0\}$ such that $\mathbf{y}^{T}F_{\rm TG}\mathbf{y}=0$. Then
\begin{displaymath}
\mathbf{y}^{T}A^{\frac{1}{2}}\overline{M}A^{\frac{1}{2}}\mathbf{y}=0 \quad \text{and} \quad \mathbf{y}^{T}\big(I-A^{\frac{1}{2}}M^{T}A^{\frac{1}{2}}\big)\Pi\big(I-A^{\frac{1}{2}}MA^{\frac{1}{2}}\big)\mathbf{y}=0,
\end{displaymath}
which lead to
\begin{displaymath}
\mathbf{y}\in\mathcal{N}\big(\big(A^{\frac{1}{2}}\overline{M}A^{\frac{1}{2}}\big)^{\frac{1}{2}}\big)=\mathcal{N}\big(A^{\frac{1}{2}}\overline{M}A^{\frac{1}{2}}\big)
\end{displaymath}
and
\begin{displaymath}
\big(I-A^{\frac{1}{2}}MA^{\frac{1}{2}}\big)\mathbf{y}\in\mathcal{N}(\Pi)=\mathcal{N}\big(P^{T}A^{\frac{1}{2}}\big).
\end{displaymath}
Thus,
\begin{displaymath}
\mathbf{y}\in\mathcal{N}\big(A^{\frac{1}{2}}\overline{M}A^{\frac{1}{2}}\big)\cap\mathcal{N}\big(P^{T}(I-AM)A^{\frac{1}{2}}\big),
\end{displaymath}
which contradicts with~\eqref{equiv-cond}, because $\mathbf{y}\notin\mathcal{N}(A)$. This completes the proof.
\end{proof}

\begin{corollary}\label{cor:suff-cond}
Let $\overline{M}$ be defined by~\eqref{barM}. If $\overline{M}\succeq 0$ and
\begin{equation}\label{suff-cond}
\mathcal{N}(\overline{M})\cap\mathcal{R}(A)=\{0\},
\end{equation}
then $\|E_{\rm TG}\|_{A}<1$.
\end{corollary}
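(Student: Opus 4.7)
The plan is to deduce the corollary directly from Lemma~\ref{lem:equiv-cond} by verifying condition~\eqref{equiv-cond}. Since the inclusion
\[
\mathcal{N}(A)\subseteq\mathcal{N}\big(A^{\frac{1}{2}}\overline{M}A^{\frac{1}{2}}\big)\cap\mathcal{N}\big(P^{T}(I-AM)A^{\frac{1}{2}}\big)
\]
holds automatically (as already observed at the beginning of the proof of Lemma~\ref{lem:equiv-cond}), the task reduces to proving the reverse inclusion. In fact, I will argue the slightly stronger statement
\[
\mathcal{N}\big(A^{\frac{1}{2}}\overline{M}A^{\frac{1}{2}}\big)\subseteq\mathcal{N}(A),
\]
which immediately makes the intersection in~\eqref{equiv-cond} equal to $\mathcal{N}(A)$ regardless of the second set, and thus the corollary follows from Lemma~\ref{lem:equiv-cond}.

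To establish this inclusion, I would take an arbitrary $\mathbf{v}\in\mathcal{N}\big(A^{\frac{1}{2}}\overline{M}A^{\frac{1}{2}}\big)$ and exploit the assumption $\overline{M}\succeq 0$: since $\overline{M}$ admits an SPSD square root $\overline{M}^{\frac{1}{2}}$, the identity $\mathbf{v}^{T}A^{\frac{1}{2}}\overline{M}A^{\frac{1}{2}}\mathbf{v}=\big\|\overline{M}^{\frac{1}{2}}A^{\frac{1}{2}}\mathbf{v}\big\|_{2}^{2}=0$ yields $\overline{M}^{\frac{1}{2}}A^{\frac{1}{2}}\mathbf{v}=0$, hence $\overline{M}A^{\frac{1}{2}}\mathbf{v}=0$. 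In other words, $A^{\frac{1}{2}}\mathbf{v}\in\mathcal{N}(\overline{M})$. Because $A^{\frac{1}{2}}\mathbf{v}$ also lies in $\mathcal{R}\big(A^{\frac{1}{2}}\big)=\mathcal{R}(A)$, the hypothesis~\eqref{suff-cond} forces $A^{\frac{1}{2}}\mathbf{v}=0$, i.e., $\mathbf{v}\in\mathcal{N}\big(A^{\frac{1}{2}}\big)=\mathcal{N}(A)$, as desired.

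There is no substantial obstacle here; the only mild subtlety is recognizing that the condition $\mathcal{N}(\overline{M})\cap\mathcal{R}(A)=\{0\}$, combined with positive semidefiniteness of $\overline{M}$, is exactly what collapses $\mathcal{N}\big(A^{\frac{1}{2}}\overline{M}A^{\frac{1}{2}}\big)$ onto $\mathcal{N}(A)$ via the square-root trick, after which Lemma~\ref{lem:equiv-cond} finishes the proof.
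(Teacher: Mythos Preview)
Your proof is correct and follows essentially the same approach as the paper: both arguments show that $\mathcal{N}\big(A^{\frac{1}{2}}\overline{M}A^{\frac{1}{2}}\big)=\mathcal{N}(A)$ by using the SPSD square root $\overline{M}^{\frac{1}{2}}$ together with the hypothesis $\mathcal{N}(\overline{M})\cap\mathcal{R}(A)=\{0\}$, and then invoke Lemma~\ref{lem:equiv-cond}. The only cosmetic difference is that the paper first establishes $\mathcal{N}\big(\overline{M}A^{\frac{1}{2}}\big)=\mathcal{N}(A)$ and then passes to $\mathcal{N}\big(A^{\frac{1}{2}}\overline{M}A^{\frac{1}{2}}\big)$ via the square-root identity, whereas you start from $\mathcal{N}\big(A^{\frac{1}{2}}\overline{M}A^{\frac{1}{2}}\big)$ and apply the square-root trick immediately; the substance is identical.
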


\begin{proof}
For any $\mathbf{v}\in\mathcal{N}\big(\overline{M}A^{\frac{1}{2}}\big)$, we have
\begin{displaymath}
A^{\frac{1}{2}}\mathbf{v}\in\mathcal{N}(\overline{M})\cap\mathcal{R}(A).
\end{displaymath}
It then follows from~\eqref{suff-cond} that $\mathbf{v}\in\mathcal{N}(A)$. Since $\mathbf{v}$ is arbitrary and 
\begin{displaymath}
\mathcal{N}(A)\subseteq\mathcal{N}\big(\overline{M}A^{\frac{1}{2}}\big),
\end{displaymath}
we obtain
\begin{displaymath}
\mathcal{N}\big(\overline{M}A^{\frac{1}{2}}\big)=\mathcal{N}(A),
\end{displaymath}
which, together with the positive semidefiniteness of $\overline{M}$, yields
\begin{displaymath}
\mathcal{N}\big(A^{\frac{1}{2}}\overline{M}A^{\frac{1}{2}}\big)=\mathcal{N}\big(\overline{M}^{\frac{1}{2}}A^{\frac{1}{2}}\big)=\mathcal{N}\big(\overline{M}A^{\frac{1}{2}}\big)=\mathcal{N}(A).
\end{displaymath}
The desired result then follows from Lemma~\ref{lem:equiv-cond}.
\end{proof}

\begin{remark}\label{rek:SPD-M}
Let
\begin{displaymath}
A=D+L+L^{T},
\end{displaymath}
where $D$ and $L$ denote the diagonal and strictly lower triangular parts of $A$, respectively. Without loss of generality, we assume that $D\succ 0$. If some diagonal entry of $D$ is zero, the corresponding row and column in $A$ are both zero, in which case one may solve a reduced version of~\eqref{system} obtained by deleting the zero row and column. From Corollary~\ref{cor:suff-cond}, we deduce that $\|E_{\rm TG}\|_{A}<1$ if $\overline{M}\succ 0$, which can be satisfied by the most commonly used weighted Jacobi and Gauss--Seidel smoothers.
\begin{itemize}[leftmargin=0.85cm]

\item If $M=\omega D^{-1}$ with $0<\omega<\frac{2}{\lambda_{\max}(D^{-1}A)}$, then
\begin{displaymath}
\overline{M}=\omega^{2}D^{-1}\big(2\omega^{-1}D-A\big)D^{-1}\succ 0.
\end{displaymath}

\item If $M=(D+L)^{-1}$, then
\begin{displaymath}
\overline{M}=(D+L)^{-T}D(D+L)^{-1}\succ 0.
\end{displaymath}

\end{itemize}
In both cases, it holds that $\|E_{\rm TG}\|_{A}<1$.
\end{remark}

\section{Convergence analysis of Algorithm~\ref{alg:TG}} \label{sec:exact}

In this section, we present a succinct identity for characterizing the convergence factor $\|E_{\rm TG}\|_{A}$.

Let
\begin{displaymath}
r=\rank(A) \quad \text{and} \quad s=\rank(A_{\rm c}).
\end{displaymath}
Since $A_{\rm c}=P^{T}AP$, it follows that $s\leq r$. In the extreme case $s=r$, we have
\begin{displaymath}
\rank(\Pi)=\rank\big(A^{\frac{1}{2}}P\big)=s=r,
\end{displaymath}
which, combined with the fact $\mathcal{R}(\Pi)\subseteq\mathcal{R}(A)$, leads to
\begin{displaymath}
\mathcal{R}(\Pi)=\mathcal{R}(A).
\end{displaymath}
Then
\begin{displaymath}
(I-\Pi)\mathbf{v}=0 \quad \forall\,\mathbf{v}\in\mathcal{R}(A),
\end{displaymath}
which, together with~\eqref{ETG-Anorm}, yields $\|E_{\rm TG}\|_{A}=0$. However, such a case seldom occurs in practice. In what follows, we only consider the nontrivial case $s<r$.

We first prove a technical lemma, which gives a reformulation of the convergence factor $\|E_{\rm TG}\|_{A}$.

\begin{lemma}\label{lem:refor}
The convergence factor $\|E_{\rm TG}\|_{A}$ can be reformulated as
\begin{equation}\label{identity0}
\|E_{\rm TG}\|_{A}=\sqrt{1-\lambda_{n-r+1}(F_{\rm TG})},
\end{equation}
where $r=\rank(A)$ and $F_{\rm TG}$ is given by~\eqref{F-TG}.
\end{lemma}

\begin{proof}
It can be easily seen from~\eqref{pi} and~\eqref{F-TG} that
\begin{equation}\label{nullAinF}
\mathcal{N}(A)\subseteq\mathcal{N}(F_{\rm TG}).
\end{equation}
For any $\mathbf{v}\in\mathcal{N}(F_{\rm TG})$, we have
\begin{displaymath}
0=\mathbf{v}^{T}F_{\rm TG}\mathbf{v}=\mathbf{v}^{T}A^{\frac{1}{2}}\overline{M}A^{\frac{1}{2}}\mathbf{v}+\mathbf{v}^{T}\big(I-A^{\frac{1}{2}}M^{T}A^{\frac{1}{2}}\big)\Pi\big(I-A^{\frac{1}{2}}MA^{\frac{1}{2}}\big)\mathbf{v},
\end{displaymath}
from which we deduce that
\begin{displaymath}
\mathbf{v}\in\mathcal{N}\big(A^{\frac{1}{2}}\overline{M}A^{\frac{1}{2}}\big) \quad \text{and} \quad \big(I-A^{\frac{1}{2}}MA^{\frac{1}{2}}\big)\mathbf{v}\in\mathcal{N}(\Pi)=\mathcal{N}\big(P^{T}A^{\frac{1}{2}}\big).
\end{displaymath}
Then
\begin{displaymath}
\mathbf{v}\in\mathcal{N}\big(A^{\frac{1}{2}}\overline{M}A^{\frac{1}{2}}\big)\cap\mathcal{N}\big(P^{T}(I-AM)A^{\frac{1}{2}}\big)=\mathcal{N}(A),
\end{displaymath}
where we have used the condition~\eqref{equiv-cond}. The arbitrariness of $\mathbf{v}$ implies that
\begin{equation}\label{nullFinA}
\mathcal{N}(F_{\rm TG})\subseteq\mathcal{N}(A).
\end{equation}
Combining~\eqref{nullAinF} and~\eqref{nullFinA}, we obtain
\begin{displaymath}
\mathcal{N}(F_{\rm TG})=\mathcal{N}(A)
\end{displaymath}
and hence
\begin{displaymath}
\mathcal{R}(F_{\rm TG})=\mathcal{R}(A).
\end{displaymath}

Since $F_{\rm TG}\succeq 0$ and $\rank(F_{\rm TG})=r$, there exists an orthogonal matrix $Q_{1}\in\mathbb{R}^{n\times n}$ such that
\begin{displaymath}
Q_{1}^{T}F_{\rm TG}Q_{1}=\begin{pmatrix}
\Lambda_{r} & 0 \\
0 & 0
\end{pmatrix},
\end{displaymath}
where $\Lambda_{r}\in\mathbb{R}^{r\times r}$ is diagonal and $\Lambda_{r}\succ 0$. We then have
\begin{displaymath}
\mathcal{R}(A)=\mathcal{R}(F_{\rm TG})=\mathcal{R}\bigg(Q_{1}\begin{pmatrix}I_{r} \\ 0\end{pmatrix}\bigg).
\end{displaymath}
Thus,
\begin{align*}
\|E_{\rm TG}\|_{A}^{2}&=1-\min_{\mathbf{v}\in\mathcal{R}(A)\backslash\{0\}}\frac{\mathbf{v}^{T}F_{\rm TG}\mathbf{v}}{\mathbf{v}^{T}\mathbf{v}}\\
&=1-\min_{\mathbf{v}_{r}\in\mathbb{R}^{r}\backslash\{0\}}\frac{\mathbf{v}_{r}^{T}\big(I_{r} \,\ 0\big)Q_{1}^{T}F_{\rm TG}Q_{1}\begin{pmatrix}I_{r} \\ 0\end{pmatrix}\mathbf{v}_{r}}{\mathbf{v}_{r}^{T}\big(I_{r} \,\ 0\big)Q_{1}^{T}Q_{1}\begin{pmatrix}I_{r} \\ 0\end{pmatrix}\mathbf{v}_{r}}\\
&=1-\min_{\mathbf{v}_{r}\in\mathbb{R}^{r}\backslash\{0\}}\frac{\mathbf{v}_{r}^{T}\Lambda_{r}\mathbf{v}_{r}}{\mathbf{v}_{r}^{T}\mathbf{v}_{r}}\\
&=1-\lambda_{\min}(\Lambda_{r})\\
&=1-\lambda_{n-r+1}(F_{\rm TG}),
\end{align*}
which yields the expression~\eqref{identity0}.
\end{proof}

\begin{remark}
A symmetric two-grid scheme can be obtained by adding the postsmoothing step
\begin{displaymath}
\mathbf{u}_{\rm STG}\gets\mathbf{u}_{\rm TG}+M^{T}(\mathbf{f}-A\mathbf{u}_{\rm TG})
\end{displaymath}
after the prolongation step of Algorithm~\ref{alg:TG}. The error propagation of the symmetric two-grid scheme can be expressed as
\begin{displaymath}
\mathbf{u}-\mathbf{u}_{\rm STG}=E_{\rm STG}\big(\mathbf{u}-\mathbf{u}^{(0)}\big)
\end{displaymath}
with
\begin{displaymath}
E_{\rm STG}=(I-M^{T}A)\big(I-PA_{\rm c}^{\dagger}P^{T}A\big)(I-MA).
\end{displaymath}
Since
\begin{displaymath}
A^{\frac{1}{2}}E_{\rm STG}=\big(I-A^{\frac{1}{2}}M^{T}A^{\frac{1}{2}}\big)(I-\Pi)\big(I-A^{\frac{1}{2}}MA^{\frac{1}{2}}\big)A^{\frac{1}{2}}=(I-F_{\rm TG})A^{\frac{1}{2}},
\end{displaymath}
we obtain
\begin{displaymath}
\|E_{\rm STG}\|_{A}=\max_{\mathbf{v}\in\mathcal{R}(A)\backslash\{0\}}\frac{\|(I-F_{\rm TG})\mathbf{v}\|_{2}}{\|\mathbf{v}\|_{2}}.
\end{displaymath}
Then
\begin{align*}
\|E_{\rm STG}\|_{A}^{2}&=\max_{\mathbf{v}\in\mathcal{R}(A)\backslash\{0\}}\frac{\mathbf{v}^{T}(I-F_{\rm TG})^{2}\mathbf{v}}{\mathbf{v}^{T}\mathbf{v}}\\
&=\max_{\mathbf{v}_{r}\in\mathbb{R}^{r}\backslash\{0\}}\frac{\mathbf{v}_{r}^{T}\big(I_{r} \,\ 0\big)Q_{1}^{T}(I-F_{\rm TG})^{2}Q_{1}\begin{pmatrix}I_{r} \\ 0\end{pmatrix}\mathbf{v}_{r}}{\mathbf{v}_{r}^{T}\big(I_{r} \,\ 0\big)Q_{1}^{T}Q_{1}\begin{pmatrix}I_{r} \\ 0\end{pmatrix}\mathbf{v}_{r}}\\
&=\max_{\mathbf{v}_{r}\in\mathbb{R}^{r}\backslash\{0\}}\frac{\mathbf{v}_{r}^{T}(I_{r}-\Lambda_{r})^{2}\mathbf{v}_{r}}{\mathbf{v}_{r}^{T}\mathbf{v}_{r}}.
\end{align*}
Due to
\begin{displaymath}
Q_{1}\begin{pmatrix}
I_{r}-\Lambda_{r} & 0 \\
0 & I_{n-r}
\end{pmatrix}Q_{1}^{T}=I-F_{\rm TG}\succeq 0 \quad \text{and} \quad \Lambda_{r}\succ 0,
\end{displaymath}
it follows that $\lambda(\Lambda_{r})\subset(0,1]$. Hence,
\begin{displaymath}
\|E_{\rm STG}\|_{A}^{2}=\lambda_{\max}\big((I_{r}-\Lambda_{r})^{2}\big)=\big(1-\lambda_{\min}(\Lambda_{r})\big)^{2}=\|E_{\rm TG}\|_{A}^{4},
\end{displaymath}
which yields
\begin{displaymath}
\|E_{\rm STG}\|_{A}=\|E_{\rm TG}\|_{A}^{2}.
\end{displaymath}
This shows that there is no essential difference between the convergence analysis of Algorithm~\ref{alg:TG} and that of its symmetrized version.
\end{remark}

Based on~\eqref{identity0} and its proof, we can further derive the following identity, which is the main result of this section.

\begin{theorem}\label{thm:ide}
Define
\begin{equation}\label{tildM}
\widetilde{M}:=M+M^{T}-MAM^{T}
\end{equation}
and
\begin{equation}\label{piA}
\Pi_{A}:=PA_{\rm c}^{\dagger}P^{T}A.
\end{equation}
Then, it holds that
\begin{equation}\label{identity}
\|E_{\rm TG}\|_{A}=\sqrt{1-\lambda_{n-r+s+1}\big(\widetilde{M}A(I-\Pi_{A})\big)},
\end{equation}
where $r=\rank(A)$ and $s=\rank(A_{\rm c})$.
\end{theorem}

\begin{proof}
Due to
\begin{displaymath}
F_{\rm TG}=I-\big(I-A^{\frac{1}{2}}M^{T}A^{\frac{1}{2}}\big)(I-\Pi)\big(I-A^{\frac{1}{2}}MA^{\frac{1}{2}}\big),
\end{displaymath}
it follows that
\begin{align*}
\lambda(F_{\rm TG})&=\lambda\big(I-\big(I-A^{\frac{1}{2}}MA^{\frac{1}{2}}\big)\big(I-A^{\frac{1}{2}}M^{T}A^{\frac{1}{2}}\big)(I-\Pi)\big)\\
&=\lambda\big(I-\big(I-A^{\frac{1}{2}}\widetilde{M}A^{\frac{1}{2}}\big)(I-\Pi)\big)\\
&=\lambda\big(A^{\frac{1}{2}}\widetilde{M}A^{\frac{1}{2}}(I-\Pi)+\Pi\big).
\end{align*}
Since $\Pi$ is an $L^{2}$-orthogonal projector and $\rank(\Pi)=s$, there exists an orthogonal matrix $Q_{2}\in\mathbb{R}^{n\times n}$ such that
\begin{equation}\label{Q2pi}
Q_{2}^{T}\Pi Q_{2}=\begin{pmatrix}
I_{s} & 0 \\
0 & 0
\end{pmatrix}.
\end{equation}
Let
\begin{equation}\label{Q2AtM}
Q_{2}^{T}A^{\frac{1}{2}}\widetilde{M}A^{\frac{1}{2}}Q_{2}=\begin{pmatrix}
X_{1} & X_{2} \\
X_{2}^{T} & X_{3}
\end{pmatrix},
\end{equation}
where $X_{1}\in\mathbb{R}^{s\times s}$, $X_{2}\in\mathbb{R}^{s\times(n-s)}$, and $X_{3}\in\mathbb{R}^{(n-s)\times(n-s)}$. Then
\begin{displaymath}
A^{\frac{1}{2}}\widetilde{M}A^{\frac{1}{2}}(I-\Pi)+\Pi=Q_{2}\begin{pmatrix}
I_{s} & X_{2} \\
0 & X_{3}
\end{pmatrix}Q_{2}^{T}.
\end{displaymath}
Note that $A^{\frac{1}{2}}\widetilde{M}A^{\frac{1}{2}}(I-\Pi)+\Pi$ has $r$ positive eigenvalues and $n-r$ zero eigenvalues; see the proof of~\eqref{identity0}. We conclude that $X_{3}$ has $r-s$ positive eigenvalues and $n-r$ zero eigenvalues.

Direct computation yields
\begin{displaymath}
\big(I-A^{\frac{1}{2}}MA^{\frac{1}{2}}\big)\big(I-A^{\frac{1}{2}}M^{T}A^{\frac{1}{2}}\big)=I-A^{\frac{1}{2}}\widetilde{M}A^{\frac{1}{2}}=Q_{2}\begin{pmatrix}
I_{s}-X_{1} & -X_{2} \\
-X_{2}^{T} & I_{n-s}-X_{3}
\end{pmatrix}Q_{2}^{T}.
\end{displaymath}
The positive semidefiniteness of $I-A^{\frac{1}{2}}\widetilde{M}A^{\frac{1}{2}}$ implies that $\lambda_{\max}(X_{3})\leq 1$. Thus,
\begin{displaymath}
\lambda_{n-r+1}(F_{\rm TG})=\lambda_{n-r+1}\big(A^{\frac{1}{2}}\widetilde{M}A^{\frac{1}{2}}(I-\Pi)+\Pi\big)=\lambda_{n-r+1}(X_{3}).
\end{displaymath}
Since
\begin{displaymath}
(I-\Pi)A^{\frac{1}{2}}\widetilde{M}A^{\frac{1}{2}}(I-\Pi)=Q_{2}\begin{pmatrix}
0 & 0 \\
0 & X_{3}
\end{pmatrix}Q_{2}^{T},
\end{displaymath}
we obtain
\begin{align*}
\lambda_{n-r+1}(X_{3})&=\lambda_{n-r+s+1}\big((I-\Pi)A^{\frac{1}{2}}\widetilde{M}A^{\frac{1}{2}}(I-\Pi)\big)\\
&=\lambda_{n-r+s+1}\big(\widetilde{M}A^{\frac{1}{2}}(I-\Pi)A^{\frac{1}{2}}\big)\\
&=\lambda_{n-r+s+1}\big(\widetilde{M}A(I-\Pi_{A})\big).
\end{align*}
The identity~\eqref{identity} then follows by using~\eqref{identity0}.
\end{proof}

\begin{remark}
In view of~\eqref{tildM}, we have
\begin{displaymath}
I-A^{\frac{1}{2}}\widetilde{M}A^{\frac{1}{2}}=\big(I-A^{\frac{1}{2}}MA^{\frac{1}{2}}\big)\big(I-A^{\frac{1}{2}}M^{T}A^{\frac{1}{2}}\big)\succeq 0,
\end{displaymath}
which leads to
\begin{displaymath}
\lambda_{\max}\big(A^{\frac{1}{2}}\widetilde{M}A^{\frac{1}{2}}\big)\leq 1.
\end{displaymath}
In addition,
\begin{align*}
\lambda\big(A^{\frac{1}{2}}\widetilde{M}A^{\frac{1}{2}}\big)&=\lambda\big(I-\big(I-A^{\frac{1}{2}}MA^{\frac{1}{2}}\big)\big(I-A^{\frac{1}{2}}M^{T}A^{\frac{1}{2}}\big)\big)\\
&=\lambda\big(I-\big(I-A^{\frac{1}{2}}M^{T}A^{\frac{1}{2}}\big)\big(I-A^{\frac{1}{2}}MA^{\frac{1}{2}}\big)\big)\\
&=\lambda\big(A^{\frac{1}{2}}\overline{M}A^{\frac{1}{2}}\big)\subset[0,+\infty),
\end{align*}
where $\overline{M}$ is defined by~\eqref{barM}. Hence, $A^{\frac{1}{2}}\widetilde{M}A^{\frac{1}{2}}$ is an SPSD matrix with eigenvalues contained in $[0,1]$. Since
\begin{displaymath}
A^{\frac{1}{2}}\widetilde{M}A^{\frac{1}{2}}-\big(A^{\frac{1}{2}}\widetilde{M}A^{\frac{1}{2}}\big)^{\frac{1}{2}}(I-\Pi)\big(A^{\frac{1}{2}}\widetilde{M}A^{\frac{1}{2}}\big)^{\frac{1}{2}}\succeq 0,
\end{displaymath}
it follows that
\begin{displaymath}
\lambda_{n-r+s+1}\big(\widetilde{M}A(I-\Pi_{A})\big)=\lambda_{n-r+s+1}\big(A^{\frac{1}{2}}\widetilde{M}A^{\frac{1}{2}}(I-\Pi)\big)\leq\lambda_{n-r+s+1}(\widetilde{M}A),
\end{displaymath}
which, combined with~\eqref{identity}, gives
\begin{equation}\label{ETG-low}
\|E_{\rm TG}\|_{A}\geq\sqrt{1-\lambda_{n-r+s+1}(\widetilde{M}A)}.
\end{equation}
In the special case where $A\succ 0$ and $\rank(P)=n_{\rm c}$ (i.e., $r=n$ and $s=n_{\rm c}$), \eqref{ETG-low} reduces to
\begin{equation}\label{ETG-low1}
\|E_{\rm TG}\|_{A}\geq\sqrt{1-\lambda_{n_{\rm c}+1}(\widetilde{M}A)}.
\end{equation}
As proved in~\cite[Lemma~1]{Brannick2018}, the lower bound in~\eqref{ETG-low1} is attainable. Furthermore, \eqref{ETG-low1} and its sharpness still hold when $A\succeq 0$, $\mathcal{N}(A)\subset\mathcal{R}(P)$, and $\rank(P)=n_{\rm c}$; see~\cite[Theorem~5.6]{XZ2017}.
\end{remark}

Using~\eqref{identity}, we can prove the following corollary.

\begin{corollary}
Let
\begin{displaymath}
d_{1}=\dim\big(\mathcal{R}(P)\cap\mathcal{N}(A)\big) \quad \text{and} \quad d_{2}=\dim\big(\mathcal{R}(P)+\mathcal{N}(A)\big).
\end{displaymath}
Then, it holds that
\begin{subequations}
\begin{align}
\|E_{\rm TG}\|_{A}&=\sqrt{1-\lambda_{n-r+\rank(P)-d_{1}+1}\big(\widetilde{M}A(I-\Pi_{A})\big)}\label{identity-d1}\\
&=\sqrt{1-\lambda_{d_{2}+1}\big(\widetilde{M}A(I-\Pi_{A})\big)}.\label{identity-d2}
\end{align}
\end{subequations}
\end{corollary}

\begin{proof}
Assume that $\{\mathbf{p}_{1},\ldots,\mathbf{p}_{d_{1}}\}$ is a basis for $\mathcal{R}(P)\cap\mathcal{N}(A)$. Such a set can be expanded as a basis for $\mathcal{R}(P)$, which is denoted by $\big\{\mathbf{p}_{1},\ldots,\mathbf{p}_{d_{1}},\mathbf{p}_{d_{1}+1},\ldots,\mathbf{p}_{\rank(P)}\big\}$. Let
\begin{displaymath}
P=\big(\mathbf{p}_{1},\ldots,\mathbf{p}_{\rank(P)}\big)W,
\end{displaymath}
where $W\in\mathbb{R}^{\rank(P)\times n_{\rm c}}$ is of full row rank. Then
\begin{displaymath}
AP=\big(0,\ldots,0,A\mathbf{p}_{d_{1}+1},\ldots,A\mathbf{p}_{\rank(P)}\big)W.
\end{displaymath}
On the one hand,
\begin{displaymath}
\rank(AP)\leq\rank\big(\big(A\mathbf{p}_{d_{1}+1},\ldots,A\mathbf{p}_{\rank(P)}\big)\big).
\end{displaymath}
On the other hand,
\begin{align*}
\rank(AP)&\geq\rank\big(\big(A\mathbf{p}_{d_{1}+1},\ldots,A\mathbf{p}_{\rank(P)}\big)\big)+\rank(W)-\rank(P)\\
&=\rank\big(\big(A\mathbf{p}_{d_{1}+1},\ldots,A\mathbf{p}_{\rank(P)}\big)\big).
\end{align*}
Note that the set $\big\{A\mathbf{p}_{d_{1}+1},\ldots,A\mathbf{p}_{\rank(P)}\big\}$ is linearly independent. We then have
\begin{displaymath}
\rank(AP)=\rank\big(\big(A\mathbf{p}_{d_{1}+1},\ldots,A\mathbf{p}_{\rank(P)}\big)\big)=\rank(P)-d_{1}.
\end{displaymath}
Due to
\begin{displaymath}
\mathcal{N}(A_{\rm c})=\mathcal{N}(A^{\frac{1}{2}}P)=\mathcal{N}(AP),
\end{displaymath}
it follows that
\begin{displaymath}
s=\rank(A_{\rm c})=\rank(AP)=\rank(P)-d_{1},
\end{displaymath}
which, together with~\eqref{identity}, yields the identity~\eqref{identity-d1}.

The identity~\eqref{identity-d2} follows from~\eqref{identity-d1} and the fact
\begin{displaymath}
d_{1}=\rank(P)+n-r-d_{2},
\end{displaymath}
because
\begin{displaymath}
\dim\big(\mathcal{R}(P)\big)+\dim\big(\mathcal{N}(A)\big)=\dim\big(\mathcal{R}(P)+\mathcal{N}(A)\big)+\dim\big(\mathcal{R}(P)\cap\mathcal{N}(A)\big).
\end{displaymath}
This completes the proof.
\end{proof}

In particular, if $\mathcal{N}(A)\subset\mathcal{R}(P)$, then we have the following result, which provides an alternative to the convergence identity proved in~\cite[Theorem~5.3]{XZ2017}.

\begin{corollary}
If $\mathcal{N}(A)\subset\mathcal{R}(P)$, then
\begin{displaymath}
\|E_{\rm TG}\|_{A}=\sqrt{1-\lambda_{\rank(P)+1}\big(\widetilde{M}A(I-\Pi_{A})\big)}.
\end{displaymath}	
\end{corollary}

\begin{proof}
If $\mathcal{N}(A)\subset\mathcal{R}(P)$, then
\begin{displaymath}
d_{1}=n-r \quad \text{and} \quad d_{2}=\rank(P).
\end{displaymath}
The desired result then follows immediately from~\eqref{identity-d1} or~\eqref{identity-d2}.
\end{proof}

\section{An inexact variant of Algorithm~\ref{alg:TG} and its analysis} \label{sec:inexact}

In this section, we are devoted to analyzing the convergence of an inexact variant of Algorithm~\ref{alg:TG} (that is, the coarse-grid system~\eqref{coarse-system} is solved approximately), which is described by Algorithm~\ref{alg:iTG} below. The output of its correction step, $\hat{\mathbf{e}}_{\rm c}=\mathscr{B}_{\rm c}\llbracket\mathbf{r}_{\rm c}\rrbracket$, is expected to be a good approximation to $\mathbf{e}_{\rm c}=A_{\rm c}^{\dagger}\mathbf{r}_{\rm c}$. Here, $\mathscr{B}_{\rm c}\llbracket\cdot\rrbracket\approx A_{\rm c}^{\dagger}$ is a general mapping from $\mathbb{R}^{n_{\rm c}}$ to $\mathbb{R}^{n_{\rm c}}$.

\begin{algorithm}[!htbp]

\caption{\ Inexact two-grid method.}\label{alg:iTG}

\smallskip

\begin{algorithmic}[1]

\State Smoothing: $\mathbf{u}^{(1)}\gets\mathbf{u}^{(0)}+M\big(\mathbf{f}-A\mathbf{u}^{(0)}\big)$

\smallskip

\State Restriction: $\mathbf{r}_{\rm c}\gets P^{T}\big(\mathbf{f}-A\mathbf{u}^{(1)}\big)$

\smallskip

\State Coarse-grid correction: $\hat{\mathbf{e}}_{\rm c}\gets\mathscr{B}_{\rm c}\llbracket\mathbf{r}_{\rm c}\rrbracket$

\smallskip

\State Prolongation: $\mathbf{u}_{\rm ITG}\gets\mathbf{u}^{(1)}+P\hat{\mathbf{e}}_{\rm c}$

\smallskip

\end{algorithmic}

\end{algorithm}

This section will be divided into two parts: the first part focuses on a theoretical estimate of $\|\mathbf{u}-\mathbf{u}_{\rm ITG}\|_{A}$; the second part provides a numerical example to illustrate our estimate.

\subsection{Convergence analysis}

To analyze the convergence of Algorithm~\ref{alg:iTG}, we need the following technical lemma.

\begin{lemma}
Let $\Pi$, $\widetilde{M}$, and $\Pi_{A}$ be defined by~\eqref{pi}, \eqref{tildM}, and~\eqref{piA}, respectively. Then
\begin{equation}\label{maxeig-ide}
\lambda_{\max}\big(\big(I-A^{\frac{1}{2}}M^{T}A^{\frac{1}{2}}\big)\Pi\big(I-A^{\frac{1}{2}}MA^{\frac{1}{2}}\big)\big)=1-\delta_{\rm TG},
\end{equation}
where
\begin{equation}\label{delta-TG}
\delta_{\rm TG}=\begin{cases}
\lambda_{n-s+1}(\widetilde{M}A\Pi_{A}) & \text{if $\mathcal{N}\big(A^{\frac{1}{2}}\widetilde{M}A^{\frac{1}{2}}\big)\cap\mathcal{R}\big(A^{\frac{1}{2}}P\big)=\{0\}$},\\[2pt]
0 & \text{otherwise}.
\end{cases}
\end{equation}
\end{lemma}

\begin{proof}
Since $A^{\frac{1}{2}}\widetilde{M}A^{\frac{1}{2}}\succeq 0$ and $\lambda\big(A^{\frac{1}{2}}\widetilde{M}A^{\frac{1}{2}}\big)\subset[0,1]$, we get from~\eqref{Q2AtM} that $X_{1}\succeq 0$ and $\lambda(X_{1})\subset[0,1]$. By~\eqref{Q2pi} and~\eqref{Q2AtM}, we have
\begin{displaymath}
\big(I-A^{\frac{1}{2}}\widetilde{M}A^{\frac{1}{2}}\big)\Pi=Q_{2}\begin{pmatrix}
I_{s}-X_{1} & 0 \\
-X_{2}^{T} & 0
\end{pmatrix}Q_{2}^{T}.
\end{displaymath}
Then
\begin{align}
\lambda_{\max}\big(\big(I-A^{\frac{1}{2}}M^{T}A^{\frac{1}{2}}\big)\Pi\big(I-A^{\frac{1}{2}}MA^{\frac{1}{2}}\big)\big)&=\lambda_{\max}\big(\big(I-A^{\frac{1}{2}}\widetilde{M}A^{\frac{1}{2}}\big)\Pi\big)\label{maxeig-ide0}\\
&=1-\lambda_{\min}(X_{1}).\notag
\end{align}

Direct computation yields
\begin{displaymath}
\Pi A^{\frac{1}{2}}\widetilde{M}A^{\frac{1}{2}}\Pi=Q_{2}\begin{pmatrix}
X_{1} & 0 \\
0 & 0
\end{pmatrix}Q_{2}^{T},
\end{displaymath}
where we have used the relations~\eqref{Q2pi} and~\eqref{Q2AtM}. It follows that
\begin{equation}\label{rank-X1}
\rank(X_{1})=\rank\big(\Pi A^{\frac{1}{2}}\widetilde{M}A^{\frac{1}{2}}\Pi\big).
\end{equation}
Next, we prove that $\rank(X_{1})=s$ (i.e., $X_{1}\succ 0$) if and only if
\begin{equation}\label{rank-rc}
\mathcal{N}\big(A^{\frac{1}{2}}\widetilde{M}A^{\frac{1}{2}}\big)\cap\mathcal{R}\big(A^{\frac{1}{2}}P\big)=\{0\}.
\end{equation}
\begin{itemize}[leftmargin=0.85cm]

\item Recall that $\mathcal{R}(\Pi)=\mathcal{R}\big(A^{\frac{1}{2}}P\big)$ and $\mathcal{N}(\Pi)=\mathcal{N}\big(P^{T}A^{\frac{1}{2}}\big)$. If~\eqref{rank-rc} holds, then
\begin{displaymath}
\mathcal{N}\big(\Pi A^{\frac{1}{2}}\widetilde{M}A^{\frac{1}{2}}\Pi\big)\subseteq\mathcal{N}(\Pi),
\end{displaymath}
which, combined with the fact $\mathcal{N}(\Pi)\subseteq\mathcal{N}\big(\Pi A^{\frac{1}{2}}\widetilde{M}A^{\frac{1}{2}}\Pi\big)$, leads to
\begin{displaymath}
\mathcal{N}\big(\Pi A^{\frac{1}{2}}\widetilde{M}A^{\frac{1}{2}}\Pi\big)=\mathcal{N}(\Pi)=\mathcal{N}\big(P^{T}A^{\frac{1}{2}}\big).
\end{displaymath}
Hence,
\begin{displaymath}
\rank\big(\Pi A^{\frac{1}{2}}\widetilde{M}A^{\frac{1}{2}}\Pi\big)=\rank\big(P^{T}A^{\frac{1}{2}}\big)=\rank(A_{\rm c})=s,
\end{displaymath}
which, together with~\eqref{rank-X1}, yields $\rank(X_{1})=s$.

\item Conversely, if $\rank(X_{1})=s$, then~\eqref{rank-rc} holds. Otherwise, there must exist a vector $\mathbf{v}\in\mathbb{R}^{n}\backslash\mathcal{N}(\Pi)$ such that
\begin{displaymath}
A^{\frac{1}{2}}\widetilde{M}A^{\frac{1}{2}}\Pi\mathbf{v}=0,
\end{displaymath}
i.e., $\mathbf{v}\in\mathcal{N}\big(\Pi A^{\frac{1}{2}}\widetilde{M}A^{\frac{1}{2}}\Pi\big)\backslash\mathcal{N}(\Pi)$. This contradicts with the fact
\begin{displaymath}
\mathcal{N}\big(\Pi A^{\frac{1}{2}}\widetilde{M}A^{\frac{1}{2}}\Pi\big)=\mathcal{N}(\Pi),
\end{displaymath}
which follows from the relations
\begin{displaymath}
\mathcal{N}(\Pi)\subseteq\mathcal{N}\big(\Pi A^{\frac{1}{2}}\widetilde{M}A^{\frac{1}{2}}\Pi\big)
\end{displaymath}
and
\begin{displaymath}
\rank(\Pi)=s=\rank(X_{1})=\rank\big(\Pi A^{\frac{1}{2}}\widetilde{M}A^{\frac{1}{2}}\Pi\big).
\end{displaymath}
\end{itemize}
The above analysis shows that
\begin{equation}\label{mineig-X1}
\lambda_{\min}(X_{1})=\begin{cases}
\lambda_{n-s+1}\big(\Pi A^{\frac{1}{2}}\widetilde{M}A^{\frac{1}{2}}\Pi\big) & \text{if~\eqref{rank-rc} holds},\\[2pt]
0 & \text{otherwise}.
\end{cases}
\end{equation}

Note that
\begin{equation}\label{mineig-X1-1}
\lambda\big(\Pi A^{\frac{1}{2}}\widetilde{M}A^{\frac{1}{2}}\Pi\big)=\lambda\big(\widetilde{M}A^{\frac{1}{2}}\Pi A^{\frac{1}{2}}\big)=\lambda(\widetilde{M}A\Pi_{A}).
\end{equation}
Combining~\eqref{maxeig-ide0}, \eqref{mineig-X1}, and~\eqref{mineig-X1-1}, we can arrive at the identity~\eqref{maxeig-ide}.
\end{proof}

\begin{remark}
If $\widetilde{M}\succeq 0$ and $\mathcal{N}(\widetilde{M})\cap\mathcal{R}(A)=\{0\}$, then~\eqref{rank-rc} holds, since
\begin{displaymath}
\mathcal{N}\big(A^{\frac{1}{2}}\widetilde{M}A^{\frac{1}{2}}\big)=\mathcal{N}\big(\widetilde{M}^{\frac{1}{2}}A^{\frac{1}{2}}\big)=\mathcal{N}\big(\widetilde{M}A^{\frac{1}{2}}\big)=\mathcal{N}(A)
\end{displaymath}
and
\begin{displaymath}
\mathcal{R}\big(A^{\frac{1}{2}}P\big)\subseteq\mathcal{R}(A)=\mathcal{N}(A)^{\perp}.
\end{displaymath}
Obviously, a sufficient condition for~\eqref{rank-rc} to be valid is $\widetilde{M}\succ 0$, which can be satisfied by the weighted Jacobi and Gauss--Seidel smoothers, as discussed in Remark~\ref{rek:SPD-M}.
\end{remark}

We are now ready to present a convergence estimate for Algorithm~\ref{alg:iTG}.

\begin{theorem}\label{thm:iTG}
Let $\rho,\varepsilon\in[0,1)$ be two parameters. If
\begin{equation}\label{TG-est}
\|\mathbf{u}-\mathbf{u}_{\rm TG}\|_{A}\leq\rho\|\mathbf{u}-\mathbf{u}^{(0)}\|_{A}
\end{equation}
and
\begin{equation}\label{accu-Bc}
\|\mathbf{e}_{\rm c}-\hat{\mathbf{e}}_{\rm c}\|_{A_{\rm c}}\leq\varepsilon\|\mathbf{e}_{\rm c}\|_{A_{\rm c}},
\end{equation}
then
\begin{equation}\label{iTG-est}
\|\mathbf{u}-\mathbf{u}_{\rm ITG}\|_{A}\leq\sqrt{\rho^{2}+\varepsilon^{2}\big(1-\max\big\{\rho^{2}+\lambda_{n-r+1}(\widetilde{M}A),\,\delta_{\rm TG}\big\}\big)}\|\mathbf{u}-\mathbf{u}^{(0)}\|_{A},
\end{equation}
where $\widetilde{M}$ and $\delta_{\rm TG}$ are given by~\eqref{tildM} and~\eqref{delta-TG}, respectively.
\end{theorem}

\begin{proof}
The condition~\eqref{accu-Bc} reads
\begin{equation}\label{rcB-low}
2\mathbf{r}_{\rm c}^{T}\hat{\mathbf{e}}_{\rm c}\geq(1-\varepsilon^{2})\|\mathbf{e}_{\rm c}\|_{A_{\rm c}}^{2}+\|\hat{\mathbf{e}}_{\rm c}\|_{A_{\rm c}}^{2}.
\end{equation}
From Algorithm~\ref{alg:iTG}, we have
\begin{displaymath}
\mathbf{u}-\mathbf{u}_{\rm ITG}=\mathbf{u}-\mathbf{u}^{(1)}-P\hat{\mathbf{e}}_{\rm c}.
\end{displaymath}
Then
\begin{align*}
\|\mathbf{u}-\mathbf{u}_{\rm ITG}\|_{A}^{2}&=(\mathbf{u}-\mathbf{u}^{(1)}-P\hat{\mathbf{e}}_{\rm c})^{T}A(\mathbf{u}-\mathbf{u}^{(1)}-P\hat{\mathbf{e}}_{\rm c})\\
&=\|\mathbf{u}-\mathbf{u}^{(1)}\|_{A}^{2}-2(\mathbf{u}-\mathbf{u}^{(1)})^{T}AP\hat{\mathbf{e}}_{\rm c}+\|\hat{\mathbf{e}}_{\rm c}\|_{A_{\rm c}}^{2}\\
&=\|\mathbf{u}-\mathbf{u}^{(1)}\|_{A}^{2}-2\mathbf{r}_{\rm c}^{T}\hat{\mathbf{e}}_{\rm c}+\|\hat{\mathbf{e}}_{\rm c}\|_{A_{\rm c}}^{2}\\
&\leq\|\mathbf{u}-\mathbf{u}^{(1)}\|_{A}^{2}-(1-\varepsilon^{2})\|\mathbf{e}_{\rm c}\|_{A_{\rm c}}^{2} \quad (\text{using~\eqref{rcB-low}})\\
&=(\mathbf{u}-\mathbf{u}^{(1)})^{T}A(\mathbf{u}-\mathbf{u}^{(1)})-(1-\varepsilon^{2})\mathbf{r}_{\rm c}^{T}A_{\rm c}^{\dagger}\mathbf{r}_{\rm c}\\
&=(\mathbf{u}-\mathbf{u}^{(1)})^{T}A(\mathbf{u}-\mathbf{u}^{(1)})-(1-\varepsilon^{2})(\mathbf{u}-\mathbf{u}^{(1)})^{T}A^{\frac{1}{2}}\Pi A^{\frac{1}{2}}(\mathbf{u}-\mathbf{u}^{(1)})\\
&=(\mathbf{u}-\mathbf{u}^{(1)})^{T}A^{\frac{1}{2}}\big(I-(1-\varepsilon^{2})\Pi\big)A^{\frac{1}{2}}(\mathbf{u}-\mathbf{u}^{(1)}).
\end{align*}
Due to
\begin{displaymath}
\mathbf{u}-\mathbf{u}^{(1)}=(I-MA)(\mathbf{u}-\mathbf{u}^{(0)}),
\end{displaymath}
it follows that
\begin{equation}\label{iTG-est0}
\|\mathbf{u}-\mathbf{u}_{\rm ITG}\|_{A}^{2}\leq(\mathbf{u}-\mathbf{u}^{(0)})^{T}\big(E_{1}-(1-\varepsilon^{2})E_{2}\big)(\mathbf{u}-\mathbf{u}^{(0)}),
\end{equation}
where
\begin{displaymath}
E_{1}=(I-MA)^{T}A(I-MA) \quad \text{and} \quad E_{2}=(I-MA)^{T}A^{\frac{1}{2}}\Pi A^{\frac{1}{2}}(I-MA).
\end{displaymath}

Observe that
\begin{displaymath}
E_{1}\succeq 0, \quad E_{2}\succeq 0, \quad \text{and} \quad E_{1}-E_{2}\succeq 0.
\end{displaymath}
We next determine upper bounds for $\|\mathbf{u}-\mathbf{u}^{(0)}\|_{E_{1}}^{2}$, $\|\mathbf{u}-\mathbf{u}^{(0)}\|_{E_{2}}^{2}$, and their difference.
\begin{itemize}[leftmargin=0.85cm]

\item Since
\begin{displaymath}
E_{1}=A^{\frac{1}{2}}\big(I-A^{\frac{1}{2}}\overline{M}A^{\frac{1}{2}}\big)A^{\frac{1}{2}},
\end{displaymath}
we obtain
\begin{displaymath}
\|\mathbf{u}-\mathbf{u}^{(0)}\|_{E_{1}}^{2}\leq\bigg(1-\min_{\mathbf{v}\in\mathcal{R}(A)\backslash\{0\}}\frac{\mathbf{v}^{T}A^{\frac{1}{2}}\overline{M}A^{\frac{1}{2}}\mathbf{v}}{\mathbf{v}^{T}\mathbf{v}}\bigg)\|\mathbf{u}-\mathbf{u}^{(0)}\|_{A}^{2}.
\end{displaymath}
A straightforward analysis yields
\begin{equation}\label{E1-est}
\|\mathbf{u}-\mathbf{u}^{(0)}\|_{E_{1}}^{2}\leq\big(1-\lambda_{n-r+1}(\widetilde{M}A)\big)\|\mathbf{u}-\mathbf{u}^{(0)}\|_{A}^{2},
\end{equation}
where we have used the fact
\begin{displaymath}
\lambda(\overline{M}A)=\lambda(\widetilde{M}A).
\end{displaymath}

\item It is easy to see that
\begin{displaymath}
E_{2}=A^{\frac{1}{2}}\big(I-A^{\frac{1}{2}}M^{T}A^{\frac{1}{2}}\big)\Pi\big(I-A^{\frac{1}{2}}MA^{\frac{1}{2}}\big)A^{\frac{1}{2}}.
\end{displaymath}
In light of~\eqref{maxeig-ide}, we have
\begin{equation}\label{E2-est}
\|\mathbf{u}-\mathbf{u}^{(0)}\|_{E_{2}}^{2}\leq(1-\delta_{\rm TG})\|\mathbf{u}-\mathbf{u}^{(0)}\|_{A}^{2}.
\end{equation}

\item Note that
\begin{align*}
E_{1}-E_{2}&=(I-MA)^{T}A^{\frac{1}{2}}(I-\Pi)A^{\frac{1}{2}}(I-MA)\\
&=(I-MA)^{T}A^{\frac{1}{2}}(I-\Pi)^{T}(I-\Pi)A^{\frac{1}{2}}(I-MA)\\
&=(I-MA)^{T}(I-\Pi_{A})^{T}A(I-\Pi_{A})(I-MA)\\
&=E_{\rm TG}^{T}AE_{\rm TG}.
\end{align*}
By~\eqref{iteration} and~\eqref{TG-est}, we have
\begin{equation}\label{E1-E2-est}
\|\mathbf{u}-\mathbf{u}^{(0)}\|_{E_{1}-E_{2}}^{2}=\|\mathbf{u}-\mathbf{u}_{\rm TG}\|_{A}^{2}\leq\rho^{2}\|\mathbf{u}-\mathbf{u}^{(0)}\|_{A}^{2}.
\end{equation}

\end{itemize}

According to~\eqref{iTG-est0}, we deduce that
\begin{displaymath}
\|\mathbf{u}-\mathbf{u}_{\rm ITG}\|_{A}^{2}\leq\|\mathbf{u}-\mathbf{u}^{(0)}\|_{E_{1}-E_{2}}^{2}+\varepsilon^{2}\|\mathbf{u}-\mathbf{u}^{(0)}\|_{E_{2}}^{2},
\end{displaymath}
which, combined with~\eqref{E2-est} and~\eqref{E1-E2-est}, leads to
\begin{equation}\label{iTG-est1}
\|\mathbf{u}-\mathbf{u}_{\rm ITG}\|_{A}\leq\sqrt{\rho^{2}+\varepsilon^{2}(1-\delta_{\rm TG})}\|\mathbf{u}-\mathbf{u}^{(0)}\|_{A}.
\end{equation}
In addition, we get from~\eqref{iTG-est0} that
\begin{displaymath}
\|\mathbf{u}-\mathbf{u}_{\rm ITG}\|_{A}^{2}\leq(1-\varepsilon^{2})\|\mathbf{u}-\mathbf{u}^{(0)}\|_{E_{1}-E_{2}}^{2}+\varepsilon^{2}\|\mathbf{u}-\mathbf{u}^{(0)}\|_{E_{1}}^{2},
\end{displaymath}
which, together with~\eqref{E1-est} and~\eqref{E1-E2-est}, yields
\begin{equation}\label{iTG-est2}
\|\mathbf{u}-\mathbf{u}_{\rm ITG}\|_{A}\leq\sqrt{\rho^{2}+\varepsilon^{2}\big(1-\rho^{2}-\lambda_{n-r+1}(\widetilde{M}A)\big)}\|\mathbf{u}-\mathbf{u}^{(0)}\|_{A}.
\end{equation}
The desired estimate then follows immediately by combining~\eqref{iTG-est1} and~\eqref{iTG-est2}.
\end{proof}

\begin{remark}
Recall that
\begin{displaymath}
\|\mathbf{u}-\mathbf{u}_{\rm TG}\|_{A}\leq\|E_{\rm TG}\|_{A}\|\mathbf{u}-\mathbf{u}^{(0)}\|_{A},
\end{displaymath}
where $\|E_{\rm TG}\|_{A}<1$. For the sake of practicality, we have made a more general assumption in Theorem~\ref{thm:iTG}. Another main assumption made in Theorem~\ref{thm:iTG} is~\eqref{accu-Bc}, which, in fact, characterizes the relative accuracy of the coarse solver $\mathscr{B}_{\rm c}\llbracket\cdot\rrbracket$.
\end{remark}

\begin{remark}
Since
\begin{displaymath}
\lambda(\widetilde{M}A)=\lambda\big(A^{\frac{1}{2}}\widetilde{M}A^{\frac{1}{2}}\big)\subset[0,1] \quad \text{and} \quad \rank\big(A^{\frac{1}{2}}\widetilde{M}A^{\frac{1}{2}}\big)\leq\rank\big(A^{\frac{1}{2}}\big)=r,
\end{displaymath}
it follows that
\begin{displaymath}
\lambda_{n-r+1}(\widetilde{M}A)>0\Longleftrightarrow\rank\big(A^{\frac{1}{2}}\widetilde{M}A^{\frac{1}{2}}\big)=r\Longleftrightarrow\mathcal{N}\big(A^{\frac{1}{2}}\widetilde{M}A^{\frac{1}{2}}\big)=\mathcal{N}(A),
\end{displaymath}
which will be satisfied if $\widetilde{M}\succeq 0$ and $\mathcal{N}(\widetilde{M})\cap\mathcal{R}(A)=\{0\}$.
\end{remark}

\begin{remark}
From Algorithms~\ref{alg:TG} and~\ref{alg:iTG}, we have
\begin{displaymath}
\|\mathbf{u}_{\rm TG}-\mathbf{u}_{\rm ITG}\|_{A}=\|P(\mathbf{e}_{\rm c}-\hat{\mathbf{e}}_{\rm c})\|_{A}=\|\mathbf{e}_{\rm c}-\hat{\mathbf{e}}_{\rm c}\|_{A_{\rm c}},
\end{displaymath}
i.e., the perturbation bound of two-grid solution is equal to that of coarse solution. Thus, the accuracy of two-grid solver can be controlled by that of coarse solver.
\end{remark}

Let $\rho_{\rm TG}$ and $\rho_{\rm ITG}$ denote the asymptotic convergence factors of exact and inexact two-grid methods, respectively. The estimate~\eqref{iTG-est} yields
\begin{displaymath}
\rho_{\rm ITG}\leq\sqrt{\rho_{\rm TG}^{2}+\varepsilon^{2}\big(1-\max\big\{\rho_{\rm TG}^{2}+\lambda_{n-r+1}(\widetilde{M}A),\,\delta_{\rm TG}\big\}\big)},
\end{displaymath}
which implies that
\begin{equation}\label{iTG-est-asym}
\rho_{\rm ITG}\leq\sqrt{\rho_{\rm TG}^{2}+\varepsilon^{2}\big(1-\rho_{\rm TG}^{2}\big)}.
\end{equation}
If $\rho_{\rm TG}$ can be bounded uniformly from above ($\rho_{\rm TG}\leq C$ for some constant $C<1$), then
\begin{displaymath}
\rho_{\rm ITG}\leq\sqrt{\varepsilon^{2}+(1-\varepsilon^{2})\rho_{\rm TG}^{2}}\leq\sqrt{\varepsilon^{2}+(1-\varepsilon^{2})C^{2}}<1 \quad\forall\,\varepsilon\in[0,1),
\end{displaymath}
that is, for any fixed $\varepsilon\in[0,1)$, $\rho_{\rm ITG}$ can also be bounded uniformly.

In addition, we deduce from~\eqref{iTG-est-asym} that
\begin{displaymath}
\rho_{\rm ITG}\leq\rho_{\rm TG}\sqrt{1+\frac{\varepsilon^{2}\big(1-\rho_{\rm TG}^{2}\big)}{\rho_{\rm TG}^{2}}}.
\end{displaymath}
This may serve as a guide for designing some practical algorithms. For example, if $\rho_{\rm TG}=0.3$, then $\rho_{\rm ITG}\leq 0.4$ can be achieved by setting $\varepsilon=\frac{1}{\sqrt{13}}$.

\subsection{Numerical experiments}

In order to intuitively illustrate the estimate~\eqref{iTG-est-asym}, we next provide a numerical example.

Consider solving the 2D Poisson's equation with homogeneous Neumann boundary conditions on the unit square $\Omega=(0,1)\times(0,1)$:
\begin{equation}\label{Poisson}
\left\{
\begin{aligned}
-\Delta u&=f \quad \text{in $\Omega$},\\
\frac{\partial u}{\partial n}&=0 \quad \text{on $\partial\Omega$}.
\end{aligned}
\right.
\end{equation}
The source term $f=f(x,y)$ should satisfy the following \textit{compatibility condition}:
\begin{displaymath}
\int_{\Omega}f(x,y)\,\mathrm{d}x\mathrm{d}y=0.
\end{displaymath}
To solve the problem~\eqref{Poisson} numerically, we partition the domain $\Omega$ uniformly and set
\begin{displaymath}
(x_{i},y_{j})=(ih,jh) \quad \forall\,i,j=0,1,\ldots,m+1,
\end{displaymath}
where $h=1/(m+1)$. A common way to discretize the problem~\eqref{Poisson} involves the following \textit{ghost points}:
\begin{align*}
(x_{-1},y_{j})&=(-h,jh) \quad j=0,\ldots,m+1;\\
(x_{m+2},y_{j})&=(1+h,jh) \quad j=0,\ldots,m+1;\\
(x_{i},y_{-1})&=(ih,-h) \quad i=0,\ldots,m+1;\\
(x_{i},y_{m+2})&=(ih,1+h) \quad i=0,\ldots,m+1;
\end{align*}
see the yellow points in Figure~\ref{Fig:grid}.

\begin{figure}[!htbp]
\centering
\includegraphics[width=1.0\textwidth]{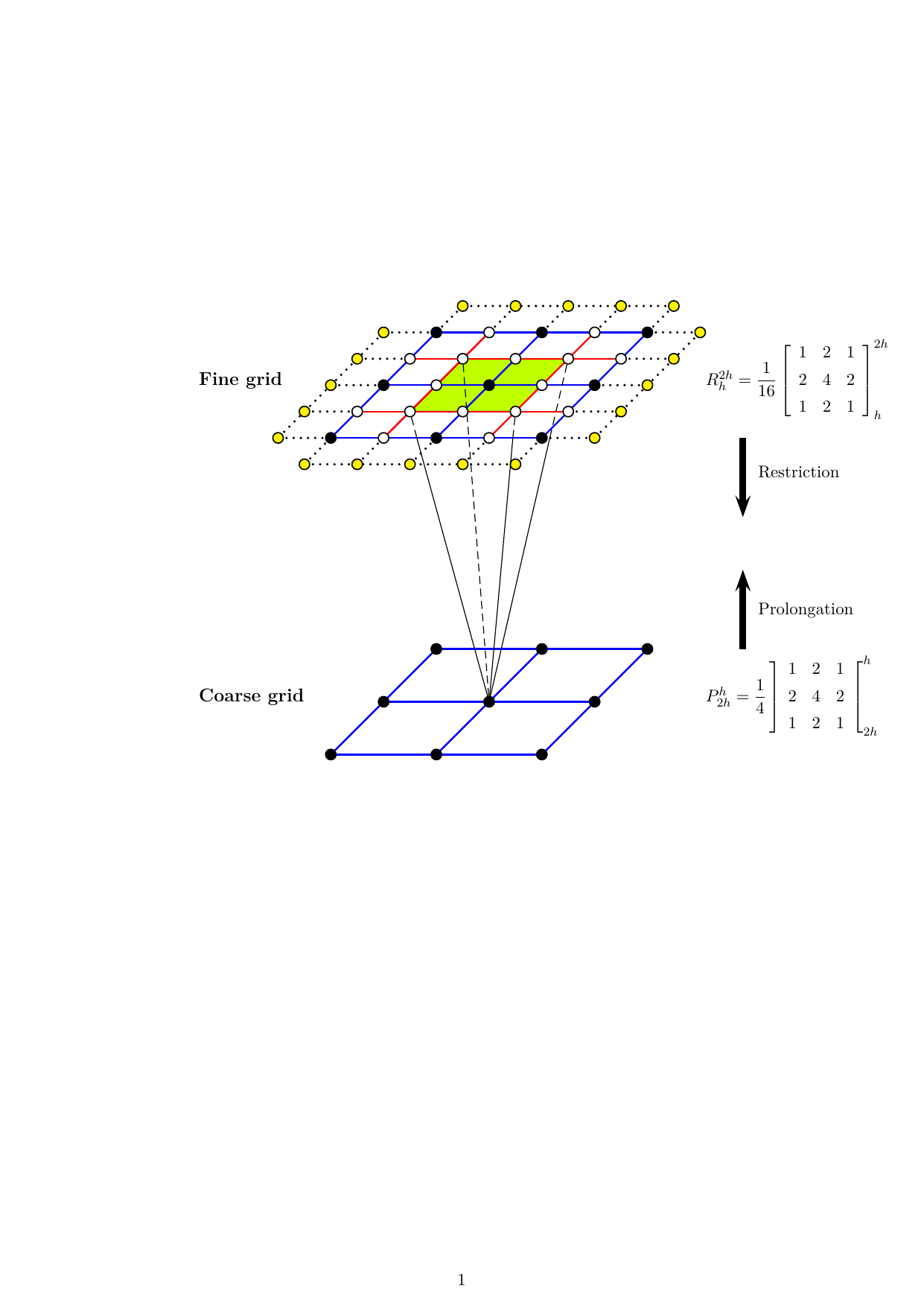}
\caption{\parbox{0.8\linewidth}{\small The restriction and prolongation processes between fine and coarse grids}}
\label{Fig:grid}
\end{figure}

The purpose of introducing these ghost points is to produce approximations at the boundary points. With this extended grid, we can apply the standard five-point difference scheme to the Laplacian operator at both interior and boundary points. Moreover, these ghost points will be used to form central difference approximations to the normal derivatives. These choices lead to the following discrete equations:
\begin{displaymath}
\left\{
\begin{aligned}
4u_{i,j}-u_{i-1,j}-u_{i+1,j}-u_{i,j-1}-u_{i,j+1}&=h^{2}f_{i,j} \quad i,j=0,\ldots,m+1,\\
u_{1,j}-u_{-1,j}&=0 \quad j=0,\ldots,m+1,\\
u_{m+2,j}-u_{m,j}&=0 \quad j=0,\ldots,m+1,\\
u_{i,1}-u_{i,-1}&=0 \quad i=0,\ldots,m+1,\\
u_{i,m+2}-u_{i,m}&=0 \quad i=0,\ldots,m+1,
\end{aligned}
\right.
\end{displaymath}
where $u_{i,j}\approx u(x_{i},y_{j})$ and $f_{i,j}=f(x_{i},y_{j})$. Eliminating the ghost unknowns $u_{-1,j}$, $u_{m+2,j}$, $u_{i,-1}$, and $u_{i,m+2}$ yields
\begin{displaymath}
\left\{
\begin{aligned}
8u_{i,j}-2u_{i-1,j}-2u_{i+1,j}-2u_{i,j-1}-2u_{i,j+1}&=2h^{2}f_{i,j} \quad i,j=1,\ldots,m,\\
4u_{i,0}-u_{i-1,0}-u_{i+1,0}-2u_{i,1}&=h^{2}f_{i,0} \quad i=1,\ldots,m,\\
4u_{0,j}-2u_{1,j}-u_{0,j-1}-u_{0,j+1}&=h^{2}f_{0,j} \quad j=1,\ldots,m,\\
4u_{m+1,j}-2u_{m,j}-u_{m+1,j-1}-u_{m+1,j+1}&=h^{2}f_{m+1,j} \quad j=1,\ldots,m,\\
4u_{i,m+1}-u_{i-1,m+1}-u_{i+1,m+1}-2u_{i,m}&=h^{2}f_{i,m+1} \quad i=1,\ldots,m,\\
2u_{0,0}-u_{1,0}-u_{0,1}&=\frac{1}{2}h^{2}f_{0,0},\\
2u_{m+1,0}-u_{m,0}-u_{m+1,1}&=\frac{1}{2}h^{2}f_{m+1,0},\\
2u_{0,m+1}-u_{1,m+1}-u_{0,m}&=\frac{1}{2}h^{2}f_{0,m+1},\\
2u_{m+1,m+1}-u_{m,m+1}-u_{m+1,m}&=\frac{1}{2}h^{2}f_{m+1,m+1}.
\end{aligned}
\right.
\end{displaymath}
Let
\begin{align*}
B&=\diag(1,2,\ldots,2,1)\in\mathbb{R}^{(m+2)\times(m+2)},\\
C&={\rm tridiag}(-1,4,-1)-2\big(\mathbf{e}_{1}\mathbf{e}_{1}^{T}+\mathbf{e}_{m+2}\mathbf{e}_{m+2}^{T}\big)\in\mathbb{R}^{(m+2)\times(m+2)},\\
D&={\rm tridiag}(-1,0,-1)\in\mathbb{R}^{(m+2)\times(m+2)},\\
\bm{\omega}&=(1,2,\ldots,2,1)^{T}\in\mathbb{R}^{m+2},\\
\mathbf{f}_{0}&=h^{2}\big(f_{0,0},f_{1,0},\ldots,f_{m+1,0},\ldots,f_{0,m+1},f_{1,m+1},\ldots,f_{m+1,m+1}\big)^{T}\in\mathbb{R}^{(m+2)^{2}},
\end{align*}
where $\mathbf{e}_{1}$ and $\mathbf{e}_{m+2}$ denote the first and last columns of the identity matrix $I_{m+2}$, respectively. By ordering all $u_{i,j}$ lexicographically, we end up with the linear system
\begin{displaymath}
A\mathbf{u}=\mathbf{f},
\end{displaymath}
where
\begin{displaymath}
A=B\otimes C+D\otimes B \quad \text{and} \quad \mathbf{f}=\frac{1}{2}(\bm{\omega}\otimes\bm{\omega})\circ\mathbf{f}_{0}.
\end{displaymath}
The symbols $\otimes$ and $\circ$ denote the Kronecker and Hadamard products of two matrices, respectively.

Assume that $m\geq 1$ is an odd number. We choose the points $(x_{i},y_{j})$ with indices $(i,j)\in\{0,2,\ldots,m+1\}\times\{0,2,\ldots,m+1\}$ as coarse ones; see the black points in Figure~\ref{Fig:grid}. Main settings in our numerical experiments are listed as follows:
\begin{itemize}[leftmargin=0.85cm]

\item $f$ is simply chosen as zero;

\item $\mathbf{u}^{(0)}$ is chosen randomly and then fixed;

\item $M$ is chosen as the symmetrized Gauss--Seidel type;

\item $R=R_{0}\otimes R_{0}$, where 
\begin{displaymath}
R_{0}=\frac{1}{4}\begin{pmatrix}
2 & 1 &   &   &   &   &   &   & \\
& 1 & 2 & 1 &   &   &   &   & \\
&   &   &   & \ddots &   &   &   & \\
&   &   &   &        & 1 & 2 & 1 & \\
&   &   &   &        &   &   & 1 & 2
\end{pmatrix}\in\mathbb{R}^{\frac{m+3}{2}\times(m+2)};
\end{displaymath}

\item $P=4R^{T}$, which is based on the prolongation stencil shown in Figure~\ref{Fig:grid};

\item $\mathscr{B}_{\rm c}\llbracket\cdot\rrbracket$ is chosen as the standard conjugate gradient method.

\end{itemize}
The corresponding numerical results are listed in Table~\ref{tab:TG} below.

\renewcommand{\arraystretch}{1.08}

\begin{table}[h!]
	
\begin{center}
	
\begin{tabular}{|c|c|ccccc|}
\hline
\multirow{2}{*}{} & $h$ & $1/32$ & $1/64$ & $1/128$ & $1/256$ & $1/512$ \\ \cline{2-7}
\multirow{2}{*}{} & $\rho_{\rm TG}$ & $0.2231$ & $0.2238$ & $0.2236$ & $0.2238$ & $0.2238$ \\
\hline
\multirow{2}{*}{$\varepsilon=0.1$} & $\rho_{\rm ITG}$ & $0.2231$ & $0.2238$ & $0.2236$ & $0.2238$ & $0.2238$ \\
& Upper bound in~\eqref{iTG-est-asym} & $0.2435$ & $0.2441$ & $0.2439$ & $0.2441$ & $0.2441$ \\
\hline
\multirow{2}{*}{$\varepsilon=0.2$} & $\rho_{\rm ITG}$ & $0.2231$ & $0.2238$ & $0.2236$ & $0.2238$ & $0.2238$ \\
& Upper bound in~\eqref{iTG-est-asym} & $0.2963$ & $0.2968$ & $0.2966$ & $0.2968$ & $0.2968$ \\
\hline
\multirow{2}{*}{$\varepsilon=0.3$} & $\rho_{\rm ITG}$ & $0.2598$ & $0.2848$ & $0.2913$ & $0.2947$ & $0.2963$ \\
& Upper bound in~\eqref{iTG-est-asym} & $0.3678$ & $0.3682$ & $0.3681$ & $0.3682$ & $0.3682$ \\
\hline
\multirow{2}{*}{$\varepsilon=0.4$} & $\rho_{\rm ITG}$ & $0.3285$ & $0.3816$ & $0.3852$ & $0.3915$ & $0.3946$ \\
& Upper bound in~\eqref{iTG-est-asym} & $0.4492$ & $0.4495$ & $0.4494$ & $0.4495$ & $0.4495$ \\
\hline
\multirow{2}{*}{$\varepsilon=0.5$} & $\rho_{\rm ITG}$ & $0.4500$ & $0.4669$ & $0.4833$ & $0.4918$ & $0.4932$ \\ 
& Upper bound in~\eqref{iTG-est-asym} & $0.5360$ & $0.5363$ & $0.5362$ & $0.5363$ & $0.5363$ \\
\hline
\end{tabular}

\end{center}

\medskip

\caption{\small Asymptotic convergence factors and their upper bounds}

\label{tab:TG}

\end{table}

From Table~\ref{tab:TG}, one can observe that
\begin{itemize}[leftmargin=0.85cm]

\item $\rho_{\rm TG}$ is essentially independent of grid size, and $\rho_{\rm ITG}$ can be bounded uniformly for any fixed $\varepsilon\in[0,1)$;

\item Satisfactory convergence may be achieved even if low-precision coarse solvers are used (so it may not be necessary to solve the coarse problem very precisely in practice);

\item Our theory can provide a good prediction for the actual convergence factors.

\end{itemize}

\section{Conclusions} \label{sec:con}

Two-grid theory plays a fundamental role in the design and analysis of multigrid methods. In this paper, we establish a new two-grid convergence theory for SPSD systems, which does not require any additional conditions on the coefficient matrix. When the Moore--Penrose inverse of coarse-grid matrix is used as a coarse solver, we derive a succinct identity for characterizing the convergence factor of two-grid methods. More generally, we present a convergence estimate for two-grid methods with approximate coarse solvers. A numerical example is also provided to illustrate our theoretical estimate.

\section*{Acknowledgments}

This work was partially supported by the National Natural Science Foundation of China (Grant No.~12401479), the Natural Science Foundation of Jiangsu Province (Grant No.~BK20241257), and the Start-up Research Fund of Southeast University (Grant No.~RF1028623372).

\bibliographystyle{amsplain}

\end{document}